\theoremstyle{definition}
\newtheorem{dfn}{Definition}
\theoremstyle{remark}
\newtheorem{rmk}{Remark}
\newtheorem{prob}{Problem}
\newtheorem{eg}{Example}
\theoremstyle{plain}
\newtheorem{thm}{Theorem}[section]
\newtheorem{prop}[thm]{Proposition}
\newtheorem{lem}[thm]{Lemma}
\newtheorem{cor}[thm]{Corollary}
\newcommand{\e}{\varepsilon}
\newcommand{\p}{\varphi}
\newcommand{\s}{\psi}
\newcommand{\vpr}{weave\xspace}
\newcommand{\vpg}{weaving\xspace}
\newcommand{\vpgc}{Weaving\xspace}
\newcommand{\vpgs}{weavings\xspace}
\newcommand{\vpgsc}{Weavings\xspace}
\newcommand{\vpa}{woven\xspace}
\newcommand{\vpac}{Woven\xspace}
\renewcommand{\epsilon}{\varepsilon}
\title{Weaving  Frames}
\author[Bemrose, Casazza, Gr\"ochenig, Lammers, and Lynch]{Travis Bemrose, Peter G. Casazza, Karlheinz Gr\"ochenig, Mark C. Lammers, Richard G. Lynch}
\thanks{The second and fifth authors were supported by NSF 1307685;
  NSF ATD 1042701; NSF ATD 00040683; AFOSR DGE51:
  FA9550-11-1-0245. The third author was
  supported by the  project P26273 - N25  of the
Austrian Science Fund (FWF)}
\begin{document}

\begin{abstract}
We study  an intriguing question in frame theory we call {\it Weaving Frames} that is partially motivated by
preprocessing of Gabor frames.  Two frames $\{\p_i\}_{i\in I}$ and $\{\psi_i
\}_{i\in I}$ for a Hilbert space ${\mathbb H}$ are {\it \vpa} if there are constants $0<A
\le B $ so that for every subset $\sigma \subset I$, the
family $\{\p_i\}_{i\in \sigma} \cup \{\psi_i\}_{i\in \sigma^c}$ is a frame for $\mathbb H$ with frame bounds
$A,B$. Fundamental properties of \vpa frames are developed and key
differences between \vpg Riesz bases and \vpg frames are considered.
In particular, it is shown that a Riesz basis  cannot be woven   with a redundant frame.  We also 
introduce an apparently weaker form of {\it weaving} but
show that it is equivalent to weaving. 
\vpgc frames has potential applications in wireless sensor networks that require distributed processing under different frames, as well as preprocessing of signals using Gabor frames. 
\end{abstract}

\maketitle

\noindent {\bfseries Keywords}  Frame, Riesz basis, distance between subspaces.

\bigskip
\noindent {\bfseries AMS Classification} 42C15

\section{Introduction}

This paper introduces a new problem  in frame theory called
\emph{\bfseries weaving frames}.  Two frames $\{\p_i\}_{i\in I}$ and
$\{\psi_i \}_{i\in I}$ for a Hilbert space ${\mathbb H}$ are (weakly)
{\it \vpa} if  for every subset $\sigma \subset I$, the 
family $\{\p_i\}_{i\in \sigma} \cup \{\psi_i\}_{i\in \sigma^c}$ is a frame for $\mathbb H$.

The concept is motivated by the following question in distributed
signal processing: given are two sets $\{\p_i\}_{i\in I} $ and
$\{\psi_i \}_{i\in I}$ of linear measurements with stable
recovery guarantees,  in mathematical terminology each  set is a
frame labeled by a node or sensor $i\in I$. At each sensor we 
measure a signal $x$ either with $\varphi _i $ or with $\psi _i$, so that the
collected information is the set of numbers $\{\langle x, \varphi
_i\rangle \} _{i \in \sigma } \cup \{ \langle x, \psi _i \rangle \}_{ i \in
\sigma ^c}$ for some subset $\sigma \subseteq I$. Can  $x$ still be
recovered robustly from these measurements, no matter 
which kind of measurement has been made at each node? In other words,
is the set $\{\p_i\}_{i\in \sigma} \cup \{\psi_i\}_{i\in \sigma^c}$ a
frame for all subsets $\sigma \subseteq I$? This question led us to
the definition of woven frames.

In this paper, we develop the fundamental properties of \vpa frames
for their own sake.  Naturally we hope that the notion of woven frames 
will be applicable to  wireless sensor networks which may be
subjected to distributed processing under different frames and
possibly in the preprocessing of signals using Gabor
frames. 

Let us briefly describe the main results about woven frames.  Let
$\Phi = \{\p_i\}_{i\in I}$ and
$\Psi = \{\psi_i \}_{i\in I}$ be two frames for a separable  Hilbert
space ${\mathbb H}$. 

(A) \emph{Uniform frame bounds.} If the weaving $\{\p_i\}_{i\in
  \sigma} \cup \{\psi_i\}_{i\in \sigma^c}$ is a
frame for every  subset $\sigma \subset I$,  then there exist
uniform frame bounds $A,B >0$ that work simultaneously for \emph{all}
weavings. In other words, the stability of the reconstruction does not
depend on  how the measurements are chosen
from the two frames $\Phi $ and $\Psi $. This surprising  fact shows
that the formal distinction of weakly woven frames and woven frames,
which we make in Sections~3 and~4, is not necessary. \\

(B) \emph{Woven Riesz bases.}  If $\Phi $ and $\Psi $ are two Riesz
bases such that every weaving $\{\p_i\}_{i\in
  \sigma} \cup \{\psi_i\}_{i\in \sigma^c}$ is a frame, then, in fact, $\{\p_i\}_{i\in
  \sigma} \cup \{\psi_i\}_{i\in \sigma^c}$ must already be a Riesz
basis for every $\sigma \subset I$. In Section~5 we also give a 
characterization of woven Riesz bases with a geometric flavor.  \\

(C) \emph{Existence of woven frames and perturbations.} The property that every weaving $\{\p_i\}_{i\in
  \sigma} \cup \{\psi_i\}_{i\in \sigma^c}$ is a
frame is rather  strong,  and it  seems that $\Phi $ and $\Psi $ must resemble each other in
some sense. We will show that if $\Psi $ is a perturbation of
$\Phi $, then $\Phi $ and $\Psi $ are indeed woven. For the technical
statements we will use several notions of perturbation, such as the
distance of the corresponding synthesis operators, or the almost
diagonalization of cross-Gramian matrix of the two frames, or the
perturbation by an invertible operator. As a consequence, every frame
with a reasonable condition number is woven with its canonical dual
frame. \\

 In the literature one finds other concepts which use multiple
 frames called \emph{quilted frames} in ~\cite{MD}.  These concepts require an underlying phase space and a notion
 of localization.  Quilted Gabor frames are then 
systems constructed from several globally defined frames by
restricting  these to certain sufficiently large  regions in the time-frequency or
time-scale plane.  Such frame constructions were 
investigated in detail by D\"orfler and Romero
in~\cite{MD,MD2,romero}. Except for the use of multiple frames,
quilted frames are seemingly unrelated to \vpg frames.





The paper is organized as follows:  Section~2 contains the basic
definitions about frames, and Section~3 introduces the new notion of
weaving frames. In Section~4 we give a characterization of weaving
frames that does not require universal frame bounds. In Section~5 we
consider the case of weaving Riesz bases and provide an abstract
characterization of when two Riesz bases are woven. In Sections~6
and~7 we provide sufficient conditions for weaving frames by means of
perturbation theory and diagonal dominance. Finally we speculate about
possible applications.

\section{Frame Theory Preliminaries}

A brief introduction to frame theory is given in this section, which
contains the necessary background for this paper. For a thorough
approach to the basics of frame theory, see \cite{petesbook,
  ole_book}. Throughout the paper, $\mathbb{H}$ will denote either a
finite or infinite dimensional Hilbert space while $\mathbb{H}^M$ will denote an  $M$-dimensional Hilbert space. Also, $I$ can represent a finite or countably infinite index set unless otherwise noted.

\begin{dfn}
A family of vectors $\Phi = \{\p_i\}_{i \in I}$ in a Hilbert space
$\mathbb{H}$ is said to be a \emph{\bfseries Riesz sequence} if  there are
constants $0 < A \leq B < \infty$ so that for all $\{c_i\}_{i \in I}
\in \ell^2(I)$, 
\[
A \sum_{i \in I} |c_i|^2 \leq \bigg\| \sum_{i \in I} c_i \p_i \bigg\|^2 \leq B \sum_{i \in I} |c_i|^2
\]
where $A$ and $B$ are the \emph{\bfseries lower Riesz bound} and
\emph{\bfseries upper Riesz bound}, respectively. If, in addition,
$\Phi $ is complete in $\mathbb{H}$, then it is a \emph{\bfseries
  Riesz basis} for $\mathbb{H}$.
\end{dfn}

An important, equivalent formulation for a Riesz
basis is that the vectors are the image of an orthonormal basis under
some invertible operator. That is, $\{\p_i\}_{i \in I}$ is a Riesz
basis for $\mathbb H$ if and only if there is an orthonormal basis
$\{e_i\}_{i \in I}$ for $\mathbb{H}$ and an invertible operator $T:
\mathbb{H} \to \mathbb{H}$ satisfying $T e_i = \p_i$ for all $i \in
I$. 

Riesz bases have proved to be  useful in those  applications in
which the assumption of orthonormality is too extreme, but the  uniqueness 
and  stability of the associated series expansion is still required.  

There are times when assuming the sequence is a Riesz basis is even too strong. In these cases, we work with
{\it frames} which are redundant family of vectors having proper subsets that span the space. Redundancy is the fundamental property
of frames which makes them so useful in practice since it
can be used to mitigate losses during transmission of a
signal, noise in the signal, and quantization errors, as well as being able to be adapted to particular signal characteristics.

\begin{dfn}
A family of vectors $\Phi = \{\p_i\}_{i \in I}$ in a Hilbert space $\mathbb{H}$ is said to be a \emph{\bfseries frame} if there are constants $0 < A \leq B < \infty$ so that for all $x \in \mathbb{H}$,
\[
A \|x \|^2 \leq \sum_{i\in I} |\langle x, \p_i\rangle|^2 \leq B \|x\|^2,
\]
where $A$ and $B$ are a choice \emph{\bfseries lower frame bound} and
\emph{\bfseries upper frame bound}, respectively. If only $B$ is
assumed to exist, then $\Phi $ is called a \emph{\bfseries Bessel sequence}. If
$A
= B = 1$, then $\Phi$  is a \emph{\bfseries Parseval frame}. 
The values $\{ \langle x,
\p_i\rangle \}_{i \in I}$ are called the \emph{\bfseries frame
  coefficients} of the vector $x\in \mathbb{H}$ with respect to the
frame $\Phi$.   
\end{dfn}

If $\Phi = \{\p_i\}_{i \in I}$ is a Bessel sequence  for $\mathbb{H}$, then the \emph{\bfseries analysis operator} of $\Phi$ is the operator $T \colon\mathbb{H} \to \ell^2(I)$ given by
\[
Tx = \{ \langle x, \p_i\rangle \}_{i\in I}
\]
and the associated \emph{\bfseries synthesis operator} is given by the adjoint operator $T^* \colon \ell^2(I) \to \mathbb{H}$ and satisfies
\[
T^{*} \{c_i\}_{i \in I}  = \sum_{i \in I} c_i \p_i.
\]
The \emph{\bfseries frame operator} $S\colon \mathbb{H}\to\mathbb{H}$ is the positive, self-adjoint, invertible operator defined by $S = T^* T$ and satisfies
\[
Sx = T^*Tx = \sum_{i \in I} \langle x, \p_i\rangle \p_i
\]
for every $x \in \mathbb{H}$. On the other hand, the \emph{\bfseries Gramian operator} $G : \ell^2(I)\to \ell^2(I)$ is the operator defined by $G = TT^{*}$ and has the matrix representation
\[
G = (\langle \p_i,\p_j\rangle )_{i,j\in I}.
\]
 These four operators are well-defined when the sequence $\Phi$ is assumed to be at least a $B$-Bessel sequence.

When $\Phi$ is a frame with bounds $A$ and $B$, the frame operator satisfies for every $x \in \mathbb{H}$,
\[ \langle Ax,x\rangle \le 
\langle Sx, x\rangle = \| T x\|^2 = \sum_{i \in I} |\langle x,\p_i\rangle |^2 \le \langle Bx,x\rangle,
\]
and hence operator inequality $A \cdot Id \leq S \leq B \cdot Id$ holds. Also, note that $\{S^{-1/2} \p_i\}_{i \in I}$ is a Parseval frame, called the \emph{\bfseries canonical Parseval frame of $\Phi$}. Finally, the norm of $S$ is $\|S\| = \|T^* T\| = \|T\|^2$.

\section{Getting Started}

In this section, the definition of \vpa frames is introduced. Then some results and examples are presented in regards to \vpg families of vectors. Throughout the rest of the paper for ease of notation, let
$$[m] = \{1,\ldots,m\} \quad \mbox{and} \quad [m]^c = \mathbb{N} \backslash [m] = \{m+1,m+2,\cdots\}$$
for a given natural number $m$. Also, denote by $[m,k] = [m+k]\backslash[m] = \{m+1,\ldots,m+k\}$ for every $m,k \in \mathbb{N}$. Here $\mathbb{N} = \{1,2,\ldots\}$.

\begin{dfn}
A family of frames $\{\p_{ij}\}_{i \in I}$ for $j \in [M]$ for a Hilbert space $\mathbb{H}$ is said to be \emph{\bfseries \vpa} if there are universal constants $A$ and $B$ so that for every partition $\{\sigma_j\}_{j \in [M]}$ of $I$, the family $\{\p_{ij}\}_{ i \in \sigma_j,j \in [M]}$ is a frame for $\mathbb{H}$ with lower and upper frame bounds $A$ and $B$, respectively. Each family $\{\p_{ij}\}_{ i \in \sigma_j,j \in [M]}$ is called a \emph{\bfseries \vpg}.
\end{dfn}

The first proposition of this section gives that every \vpg
automatically has a universal upper frame bound. 

\begin{prop}\label{upbound}
If each $\Phi _i = \{\p_{ij}\}_{ i \in I}$ is a Bessel  sequence for
$\mathbb{H}$ with  bounds $B_j$ for all $j \in [M]$, then every \vpg
is a Bessel sequence with  $\sum_{j=1}^M B_j$ as a Bessel bound. 
\end{prop}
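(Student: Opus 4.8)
The plan is to bound the weaving's squared frame-coefficient sum directly, exploiting the fact that each constituent $\Phi_j = \{\p_{ij}\}_{i\in I}$ is already Bessel with bound $B_j$. First I would fix an arbitrary partition $\{\sigma_j\}_{j\in[M]}$ of $I$ and an arbitrary $x \in \mathbb{H}$. Since the sets $\sigma_j$ are disjoint, the squared-coefficient sum for the weaving $\{\p_{ij}\}_{i\in\sigma_j,\,j\in[M]}$ splits cleanly as
\[
\sum_{j=1}^M \sum_{i\in\sigma_j} |\langle x,\p_{ij}\rangle|^2,
\]
so the problem reduces to controlling each inner sum.

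The key observation is that summing the nonnegative quantities $|\langle x,\p_{ij}\rangle|^2$ over the subset $\sigma_j \subseteq I$ can only decrease the total, so that for each fixed $j$,
\[
\sum_{i\in\sigma_j} |\langle x,\p_{ij}\rangle|^2 \le \sum_{i\in I} |\langle x,\p_{ij}\rangle|^2 \le B_j \|x\|^2,
\]
where the final inequality is precisely the Bessel bound hypothesis for $\Phi_j$. Summing these $M$ inequalities over $j$ then gives $\sum_{j=1}^M \sum_{i\in\sigma_j} |\langle x,\p_{ij}\rangle|^2 \le \big(\sum_{j=1}^M B_j\big)\|x\|^2$, which is exactly the claimed Bessel bound.

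I do not anticipate any genuine obstacle here: this is the ``easy'' half of the weaving story, in which the upper bound comes for free by monotonicity of sums over subsets. The one point worth stressing is that the bound $\sum_{j=1}^M B_j$ does not depend on the choice of partition, so it serves as a \emph{universal} upper frame bound for every weaving simultaneously. The real content — establishing a universal \emph{lower} frame bound, which is what distinguishes weakly \vpa from \vpa families — is of an entirely different character and is addressed later in the paper.
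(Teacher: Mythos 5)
Your proof is correct and follows exactly the paper's own argument: bound each sum $\sum_{i\in\sigma_j}|\langle x,\p_{ij}\rangle|^2$ by the full sum over $I$ using nonnegativity, apply the Bessel bound $B_j$ for each $\Phi_j$, and sum over $j$ to get the partition-independent bound $\sum_{j=1}^M B_j$. Nothing to add.
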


\begin{proof}
For every partition $\{\sigma_j\}_{j \in [M]}$ of $I$ and every $x \in \mathbb{H}$, 
$$
\sum_{j=1}^M \sum_{i \in \sigma_j} \left| \langle x,
  \p_{ij}\rangle\right|^2 \leq \sum_{j=1}^M \sum_{i \in I} \left|
  \langle x, \p_{ij}\rangle\right|^2 \leq \sum_{j=1}^M B_j \cdot
\|x\|^2 \, ,
$$
yielding the desired bound.
\end{proof}

To verify that a finite number of frames $\Phi _i, i\in [M]$, is
woven, we therefore  only need to  
check that there is a universal lower bound for all \vpgs.

If all $\Phi _i$ are frames for $\mathbb{H}$, then the
bound in Proposition \ref{upbound} cannot be obtained, that is, 
the sum $\sum_{j \in [M]} B_j$ cannot be the smallest upper \vpg bound
if $B_j$ is optimal for its respective frame. Since the concept of weakly
\vpa is used to show this,  the proof is deferred until Section
\ref{weakq}, Remark \ref{notupbound}). However, the sum of the upper
bounds can be approached arbitrarily, as the next example
shows. Example \ref{arbbound} also shows that one cannot hope to
classify \vpa frames by placing restrictions on the \vpg bounds, even
if the frames considered are Parseval. 

\begin{eg}\label{arbbound}
There exist two Parseval frames that give \vpgs with arbitrary \vpg bounds. Let $\epsilon > 0$, set $\delta = (1 + \epsilon^2)^{-1/2}$, and let $\{e_1,e_2\}$ be the standard orthonormal basis of $\mathbb{R}^2$. Then the two sets
\[
\Phi = \{\p_i\}_{i=1}^4 =\{\delta e_1, \delta \epsilon e_1, \delta e_2, \delta \epsilon e_2\}
\]
and
\[ 
\Psi = \{\s_i\}_{i=1}^4 =\{\delta \epsilon e_1, \delta e_1, \delta \epsilon e_2, \delta e_2\}
\]
are Parseval frames, which are \vpa since any choice of $\sigma$ gives a spanning set (see Section \ref{weakq}, Theorem \ref{spanq}). Since they are Parseval, the universal upper frame bound for every \vpg can be chosen to be $2$ by Proposition \ref{upbound}. If $\sigma = \{2,4\}$ and $x \in \mathbb{R}^2$, then
\[
\sum_{i \in \sigma} |\langle x, \p_i\rangle|^2 + \sum_{i \in \sigma^c} |\langle x, \s_i\rangle|^2 = 2 \delta^2 \epsilon^2 \|x\|^2 = \dfrac{2 \epsilon^2}{1 + \epsilon^2} \|x\|^2
\]
which can be anywhere between $0$ and $2$ for arbitrary choice of $\epsilon \in (0,\infty)$.
\end{eg}

Clearly the property of woven frames is preserved under a bounded
invertible operator. This observation sometimes helps to simplify
proofs. 

\begin{cor}\label{parcor}
When considering whether two frames $\Phi=\{\p_i\}_{i\in I}$ and $\Psi=\{\s_i\}_{i\in I}$ are \vpa, it may always be assumed that one of them is Parseval by instead considering $\{S^{-1/2}\p_i\}_{i\in I}$ and $\{S^{-1/2} \psi_i\}_{i \in I}$ where $S$ is the frame operator of 
$\Phi$.
\end{cor}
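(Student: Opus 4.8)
The plan is to establish the general principle hinted at in the preceding remark, namely that for \emph{any} bounded invertible operator $U$ on $\mathbb{H}$, the families $\Phi=\{\p_i\}_{i\in I}$ and $\Psi=\{\s_i\}_{i\in I}$ are \vpa if and only if $\{U\p_i\}_{i\in I}$ and $\{U\s_i\}_{i\in I}$ are \vpa, with uniform bounds in both directions. The corollary is then the special case $U=S^{-1/2}$: by Section~2 the family $\{S^{-1/2}\p_i\}_{i\in I}$ is the canonical Parseval frame of $\Phi$, so this choice of $U$ turns $\Phi$ into a Parseval frame while leaving the \vpa property intact.

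First I would fix an arbitrary subset $\sigma\subseteq I$ and push $U$ off the frame vectors onto the argument via its adjoint. For every $x\in\mathbb{H}$,
\[
\sum_{i\in\sigma}|\langle x,U\p_i\rangle|^2+\sum_{i\in\sigma^c}|\langle x,U\s_i\rangle|^2=\sum_{i\in\sigma}|\langle U^{*}x,\p_i\rangle|^2+\sum_{i\in\sigma^c}|\langle U^{*}x,\s_i\rangle|^2,
\]
so the \vpg frame inequalities for $\{U\p_i\}_{i\in\sigma}\cup\{U\s_i\}_{i\in\sigma^c}$ at the point $x$ coincide with those for $\{\p_i\}_{i\in\sigma}\cup\{\s_i\}_{i\in\sigma^c}$ at the point $U^{*}x$. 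Assuming $\Phi$ and $\Psi$ are \vpa with universal bounds $A,B$, the right-hand side lies between $A\|U^{*}x\|^2$ and $B\|U^{*}x\|^2$. Using $\|U^{-1}\|^{-1}\|x\|\le\|U^{*}x\|\le\|U\|\,\|x\|$ (which follows from $\|U^{*}\|=\|U\|$ and $\|(U^{*})^{-1}\|=\|U^{-1}\|$), I would conclude that $\{U\p_i\}_{i\in\sigma}\cup\{U\s_i\}_{i\in\sigma^c}$ is a frame with bounds $A\|U^{-1}\|^{-2}$ and $B\|U\|^2$.

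The decisive point — and the reason the statement holds — is that these new constants depend only on $U$ and not on the partition $\sigma$, so a single pair of universal bounds for the \vpgs of $\Phi,\Psi$ is transported to a single pair of universal bounds for the \vpgs of $\{U\p_i\},\{U\s_i\}$. The converse implication comes for free by running the identical argument with $U^{-1}$ in place of $U$. No genuine obstacle arises here; the only care needed is the uniform control of $\|U^{*}x\|$ from above and below, which is exactly what invertibility of $U$ supplies. Specializing to $U=S^{-1/2}$ yields the corollary.
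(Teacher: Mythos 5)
Your proposal is correct and follows essentially the same route as the paper: the paper disposes of this corollary with the one-line observation that the woven property is ``clearly'' preserved under bounded invertible operators, and your argument is precisely that observation carried out in detail, with the adjoint identity $\langle x, U\p_i\rangle = \langle U^{*}x,\p_i\rangle$ and the $\sigma$-independent bounds $A\|U^{-1}\|^{-2}$ and $B\|U\|^{2}$ making the uniformity explicit. Specializing to $U=S^{-1/2}$, which is bounded, invertible, and turns $\Phi$ into its canonical Parseval frame, is exactly what the paper intends.
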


The next example shows that in general, frames may be \vpa
without their canonical Parseval frames being \vpa. This also shows that applying two different operators to \vpa frames can give frames that are not \vpa.

\begin{eg}
There exist Riesz bases that are \vpa in which their canonical Parseval frames are not \vpa. Let $\Phi = \{e_1,e_2\}$ be the standard orthonormal basis for $\mathbb{R}^2$ and let $\Psi = \{\psi_1,\psi_2\}$ be defined by
$$
\psi_1= \left(\begin{array}{c} 1 \\ 1 \end{array} \right) \quad \mbox{and} \quad \psi_2 = \left(\begin{array}{c} 2 \\ 1 \end{array} \right).
$$
Note that $\Phi$ and $\Psi$ are \vpa. The frame operator of $\Psi$ is
\[
S = \psi_1\psi_1^* + \psi_2\psi_2^* = \left(\begin{array}{rr} 5 & 3 \\ 3 & 2 \end{array} \right).
\]
The canonical Parseval frame of $\Phi$ is itself. Note that because $\Phi$ is Parseval, any orthonormal basis forms a set of eigenvectors for its frame operator, the identity operator, and hence has the same eigenbasis as $S$. Finally, computing $S^{-1/2}$ gives
\[
S^{-1/2} = \left(\begin{array}{rr} 1 & -1 \\ -1 & 2 \end{array}\right)
\]
giving that the canonical Parseval frame of $\Psi$ is $\{S^{-1/2}\psi_1, S^{-1/2} \psi_2\} = \{e_2,e_1\}$ which is clearly not \vpa with $\Phi$.
\end{eg}

\section{Weakly \vpac Frames}\label{weakq}

In this section, it will be shown that if each \vpg is a frame, then necessarily 
there exists  a
universal lower frame bound for all weavings. Consequently it only
needs to  be
checked that each \vpg has a lower frame bound to show that a family
of frames are \vpa. 

\begin{dfn}
A family of frames $\{\p_{ij}\}_{i \in \mathbb{N}, j \in [M]}$ for a
Hilbert space $\mathbb{H}$ is said to be \emph{\bfseries weakly \vpa}
if for every partition  
$\{\sigma_j\}_{j \in [M]}$ of $\mathbb{N}$, the family $\{\p_{ij}\}_{i \in \sigma_j, j \in [M]}$ is a frame for $\mathbb{H}$.
\end{dfn}

The main question is whether or not weakly \vpa is equivalent to \vpa. The answer to this question in the finite dimensional case is obvious since there are only finitely many ways to partition the index set.

\begin{thm}\label{spanq}
Two frames $\{\p_i\}_{i = 1}^N$ and $\{\s_i\}_{i =1}^N$ for a
finite-dimensional Hilbert space $\mathbb{H}^M$ are \vpa if and only if for every $\sigma \subset [N]$, $\{\p_i\}_{i \in \sigma} \cup \{\s_i\}_{i \in \sigma^c}$ spans the space.
\end{thm}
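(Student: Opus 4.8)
The plan is to reduce the whole statement to the elementary finite-dimensional fact that a finite family of vectors in $\mathbb{H}^M$ is a frame if and only if it spans $\mathbb{H}^M$. Granting this, the forward implication is immediate: if the two frames are \vpa, then by definition every \vpg $\{\p_i\}_{i\in\sigma}\cup\{\s_i\}_{i\in\sigma^c}$ is a frame, and a frame necessarily spans the space. All the work is in the backward direction, where I would exploit that $[N]$ has only finitely many subsets to manufacture the universal bounds that the definition of \vpa demands.

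First I would record the spanning--frame equivalence. For any finite family the upper (Bessel) bound is automatic, since $\sum_i |\langle x,\p_i\rangle|^2$ is a finite sum bounded by $\big(\sum_i \|\p_i\|^2\big)\|x\|^2$ via Cauchy--Schwarz. For the lower bound I would invoke compactness: the map $x \mapsto \sum_i |\langle x,\p_i\rangle|^2$ is continuous on the unit sphere of $\mathbb{H}^M$, which is compact, and it vanishes at some point of the sphere exactly when a nonzero vector is orthogonal to every member of the family, i.e.\ exactly when the family fails to span. Hence if the family spans, this continuous function attains a strictly positive minimum $A>0$ on the sphere, which serves as a lower frame bound; conversely a frame obviously spans.

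For the backward direction, assume $\{\p_i\}_{i\in\sigma}\cup\{\s_i\}_{i\in\sigma^c}$ spans $\mathbb{H}^M$ for every $\sigma\subset[N]$. By the equivalence just established, each such \vpg is a frame, say with bounds $A_\sigma,B_\sigma$ satisfying $0<A_\sigma\le B_\sigma<\infty$. Since there are only $2^N$ subsets $\sigma$ of $[N]$, I would set
\[
A = \min_{\sigma\subset[N]} A_\sigma \quad\mbox{and}\quad B = \max_{\sigma\subset[N]} B_\sigma .
\]
A minimum of finitely many strictly positive numbers is strictly positive and a maximum of finitely many finite numbers is finite, so $0<A\le B<\infty$ are universal bounds valid for every \vpg simultaneously, which is precisely the assertion that the two frames are \vpa.

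I do not expect a serious obstacle here; the only point requiring genuine care is the positivity of the lower bound in the spanning--frame equivalence, and this is exactly where finite dimensionality enters through compactness of the unit sphere. Once that is in hand, the finiteness of the index set trivially supplies the universal bounds, which is the entire content separating \vpa from its weak form in this setting.
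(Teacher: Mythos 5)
Your proof is correct and follows exactly the route the paper has in mind: the paper offers no written proof, only the remark that the claim is ``obvious since there are only finitely many ways to partition the index set,'' and your argument is precisely the fleshing-out of that remark (spanning $\Leftrightarrow$ frame in finite dimensions via compactness of the unit sphere, then a minimum over the $2^N$ subsets to get universal bounds). Nothing further is needed.
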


Recall that  a finite  frame for $\mathbb{H}^M$ is said to be
\emph{\bfseries full spark},  if every $M$ element subset of the frame is linearly independent. 
We obtain the following   immediate condition for weaving finite frames.

\begin{cor}
If $\{\p_i\}_{i =1}^N$ is full spark in $\mathbb{H}^M$ and every subset of $\{\s_i\}_{i = 1}^N$ with $N-M$ elements spans, then these two frames are \vpa. In particular, if two full spark frames each have $N \geq 2M-1$ elements in an $M$ dimensional space, then they are necessarily \vpa.
\end{cor}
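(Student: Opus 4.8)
The plan is to reduce everything to the spanning criterion of Theorem~\ref{spanq}: two frames $\{\p_i\}_{i=1}^N$ and $\{\s_i\}_{i=1}^N$ for $\mathbb{H}^M$ are \vpa if and only if, for every $\sigma\subset[N]$, the weaving $\{\p_i\}_{i\in\sigma}\cup\{\s_i\}_{i\in\sigma^c}$ spans $\mathbb{H}^M$. So in both assertions I only need to verify this spanning property for an arbitrary $\sigma$, and in each case the argument is a short counting argument on $|\sigma|$ compared against $M$, using that a full spark frame has the property that any $M$ of its vectors are linearly independent and hence span.

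For the first assertion, fix $\sigma\subset[N]$ and split into two cases. If $|\sigma|\ge M$, then $\{\p_i\}_{i\in\sigma}$ contains at least $M$ of the $\p_i$; since $\{\p_i\}$ is full spark, any $M$ of them are linearly independent and hence span $\mathbb{H}^M$, so the weaving spans. If instead $|\sigma|<M$, then $|\sigma^c| = N-|\sigma| > N-M$, so $\sigma^c$ contains a subset of size exactly $N-M$. By hypothesis that subset of $\{\s_i\}$ spans, and therefore so does the larger set $\{\s_i\}_{i\in\sigma^c}$; again the weaving spans. Since every $\sigma$ falls into one of these two cases, Theorem~\ref{spanq} gives that the two frames are \vpa.

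The subtle point is the ``in particular'' claim, because it is not literally an instance of the first assertion: when $N=2M-1$ the subsets of $\{\s_i\}$ of size $N-M=M-1$ cannot span the $M$-dimensional space $\mathbb{H}^M$, so the hypothesis of the first part fails. Here I would instead exploit full spark \emph{on both sides} symmetrically. Fix $\sigma$; if $|\sigma|\ge M$ the $\p$-part of the weaving spans by full spark exactly as above. Otherwise $|\sigma|\le M-1$, whence $|\sigma^c| = N-|\sigma| \ge N-(M-1) = N-M+1 \ge M$, where the final inequality is precisely $N\ge 2M-1$. Thus $\{\s_i\}_{i\in\sigma^c}$ contains $M$ full-spark vectors and spans. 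The only case in which neither part is forced to span by itself is $N-M+1 \le |\sigma|\le M-1$, and this range is empty exactly when $N\ge 2M-1$; this is where the threshold comes from. Hence every weaving spans and the frames are \vpa. I expect the main thing to get right is this threshold bookkeeping, together with recognizing that the second statement genuinely requires the two-sided argument rather than a direct appeal to the first.
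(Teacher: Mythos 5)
Your proof is correct and takes essentially the same route as the paper: reduce to the spanning criterion of Theorem~\ref{spanq}, then run the counting argument splitting on $|\sigma|\geq M$ (full spark of $\{\p_i\}$ spans) versus $|\sigma|\leq M-1$, where $|\sigma^c|\geq N-M+1$ forces the $\s$-side to span. Your extra observation is accurate and worth noting: when $N=2M-1$ the ``in particular'' claim is not a literal instance of the first claim (since $(N-M)=(M-1)$-element sets cannot span $\mathbb{H}^M$), and the paper's terse unified proof glosses over this, silently reading ``$\{\s_i\}_{i\in\sigma^c}$ spans'' via the hypothesis in the first claim and via full spark of the second frame together with $N-M+1\geq M$ in the second.
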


\begin{proof}
Let $\sigma \subset [N]$. If $|\sigma| \geq M$, then $\{\p_i \}_{i \in \sigma}$ spans and so $\{\p_i\}_{i \in \sigma} \cup \{\s_i\}_{i \in \sigma^c}$ is a frame. If $|\sigma| \leq M - 1$, then $|\sigma^c| \geq N - M + 1 \geq M$ and so $\{\s_i \}_{i \in \sigma^c}$ spans.
\end{proof}

\begin{rmk}
The corresponding statement  fails in infinite dimensions:  two frames
with the property that every finite subset is independent may not be
\vpa. To see this, take an orthonormal basis and a non-trivial permutation of it. 
\end{rmk}

The equivalence of woven and weakly woven frames  is significantly
more difficult to show for  frames in an infinite dimensional space.
As a preparation we need the following lemma.
 
\begin{lem}
Let $\{\p_i\}_{i = 1}^\infty$ and $\{\s_i\}_{i=1}^\infty$ be frames for a Hilbert space $\mathbb{H}$.  Assume that for every two disjoint finite sets $I,J \subset \mathbb{N}$ and every $\e >0$ there are subsets $\sigma,\delta \subset \mathbb{N} \backslash (I \cup J)$ with $\delta = \mathbb{N} \backslash (I \cup J \cup \sigma)$ so that the lower frame bound of $\{\p_i\}_{i\in I \cup \sigma}\cup\{\s_i\}_{i\in J \cup \delta}$ is less than $\e$.  Then there is a subset $\mathcal{A}\subset \mathbb{N}$ so
that $\{\p_i\}_{i\in \mathcal{A}}\cup\{\s_i\}_{i\in \mathcal{A}^c}$ is not a frame, i.e., these frames are not \vpa.
\end{lem}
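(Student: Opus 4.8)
The plan is to argue by an inductive construction, producing a single partition $\mathcal{A}$ together with a sequence of unit vectors $x_n$ whose weaving coefficient sums tend to zero; this forces the lower frame bound of the weaving $\{\p_i\}_{i\in\mathcal{A}}\cup\{\s_i\}_{i\in\mathcal{A}^c}$ to vanish, so it cannot be a frame. The hypothesis will be applied repeatedly, each time with larger finite commitments $I_n$ (indices assigned to $\{\p_i\}$) and $J_n$ (indices assigned to $\{\s_i\}$), chosen so that $I_n\cup J_n=[k_n]$ is an initial segment.

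For the inductive step, suppose disjoint finite sets $I_n,J_n$ with $I_n\cup J_n=[k_n]$ have been fixed. I apply the hypothesis with $I=I_n$, $J=J_n$ and $\e=\e_n:=2^{-n}$ to obtain a partition $\sigma_n,\delta_n$ of $\mathbb{N}\setminus[k_n]$ and a unit vector $x_n$ with
\[
\sum_{i\in I_n\cup\sigma_n}|\langle x_n,\p_i\rangle|^2+\sum_{i\in J_n\cup\delta_n}|\langle x_n,\s_i\rangle|^2<\e_n.
\]
Since $\{\p_i\}$ and $\{\s_i\}$ are Bessel, the series $\sum_i|\langle x_n,\p_i\rangle|^2$ and $\sum_i|\langle x_n,\s_i\rangle|^2$ converge, so I may pick $N_n>k_n$ whose tails past $N_n$ are each $<\e_n$. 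I then permanently commit the block $(k_n,N_n]$ by following $\sigma_n,\delta_n$: set $I_{n+1}=I_n\cup(\sigma_n\cap[N_n])$, $J_{n+1}=J_n\cup(\delta_n\cap[N_n])$, and $k_{n+1}=N_n$.

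Define the final partition $\mathcal{A}=\bigcup_n I_n$, so that $\mathcal{A}^c=\bigcup_n J_n$. Two observations make the estimate work. First, the commitment stabilizes: since $\sigma_m,\delta_m\subset\mathbb{N}\setminus[k_m]$ for every $m>n$, no index $\le N_n$ is ever reassigned, so $\mathcal{A}$ agrees with $I_{n+1}$ on $[N_n]$. Second, on $[N_n]$ the weaving sum of $x_n$ against $\mathcal{A}$ is $<\e_n$, because $I_{n+1}\subset I_n\cup\sigma_n$ and $J_{n+1}\subset J_n\cup\delta_n$ make it dominated by the displayed quantity, while on the tail $(N_n,\infty)$ any assignment contributes at most $\sum_{i>N_n}\bigl(|\langle x_n,\p_i\rangle|^2+|\langle x_n,\s_i\rangle|^2\bigr)<2\e_n$. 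Hence the full weaving coefficient sum of $x_n$ against $\mathcal{A}$ is $<3\e_n\to0$, and since each $\|x_n\|=1$, the weaving admits no positive lower frame bound.

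The main obstacle to anticipate is that the witnessing vector $x_n$ supplied by the hypothesis depends on the \emph{infinite} completion $\sigma_n,\delta_n$, so one cannot directly commit to an infinite partition at a single stage. The device that resolves this is the Bessel truncation: only the finite initial block up to $N_n$ is permanently assigned, and the uncontrolled infinite tail is rendered harmless for $x_n$ by the convergence of its frame coefficients. One must also verify the bookkeeping that later stages never disturb an earlier block, which holds precisely because the hypothesis places $\sigma_n,\delta_n$ in the complement of the already-committed initial segment.
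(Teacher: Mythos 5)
Your proof is correct and follows essentially the same argument as the paper's: an inductive construction committing initial segments $I_n \cup J_n = [k_n]$ via repeated application of the hypothesis, with Bessel tail truncation making the uncommitted part harmless for each witness vector $x_n$, and a final split of the weaving sum into the stabilized block (controlled by the hypothesis) plus the tail. The only differences are cosmetic bookkeeping ($\e_n = 2^{-n}$ versus $\e/n$, and a $3\e_n$ bound versus $2\e/n$).
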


\begin{proof}
Fix $\e >0$. By assumption, by letting $I_0 = J_0 = \emptyset$, we can choose $\sigma_1 \subset \mathbb{N}$ so that if $\delta_1 = \sigma_1^c$, a lower frame bound of
$\{\p_i\}_{i\in \sigma_1}\cup\{\s_i\}_{i\in \delta_1}$ is less than $\e$. Hence, there is a vector $h_1 \in \mathbb{H}$ with $\|h_1\|=1$ so that
\[ 
\sum_{i\in \sigma_1}|\langle h_1,\p_i\rangle|^2 + \sum_{i\in \delta_1}|\langle h_1,\s_i\rangle|^2 < \e.
\]
Since
\[ 
\sum_{i=1}^\infty |\langle h_1,\p_i\rangle|^2 + \sum_{i=1}^\infty|\langle h_1,\s_i\rangle|^2 < \infty,
\]
there is a $k_1 \in \mathbb{N}$ so that
\[
\sum_{i=k_1+1}^\infty |\langle h_1,\p_i\rangle|^2+\sum_{i=k_1+1}^\infty|\langle h_1,\s_i\rangle|^2 < \e.
\]
Let $I_1 = \sigma_1 \cap [k_1]$ and $J_1 = \delta_1 \cap [k_1]$. Then
$I_1 \cap J_1 = \emptyset$ and   $I_1 \cup J_1 = [k_1]$. By
assumption, there are subsets  $\sigma_2, \delta _2 \subset [k_1]^c$
with  $\delta_2
= [k_1]^c\setminus \sigma_2$ such that  a lower frame bound of
$\{\p_i\}_{i\in I_1\cup \sigma_2}\cup\{\s_i\}_{i\in J_1 \cup
  \delta_2}$ is less than $\e/2$. That is, there is a vector $h_2 \in
\mathbb{H}$ with $\|h_2\|=1$ so that 
\[ 
\sum_{i\in I_1\cup\sigma_2}|\langle h_2,\p_i\rangle|^2 + \sum_{i \in
  J_1 \cup \delta_2}|\langle h_2,\s_i\rangle|^2 < \dfrac{\e}{2} \, .
\]
 Similar to above, there is a $k_2 > k_1$ so that
\[
\sum_{i=k_2+1}^\infty |\langle h_2,\p_i\rangle|^2+\sum_{i=k_2+1}^\infty|\langle h_2,\s_i\rangle|^2 < \dfrac{\e}{2}.
\]
Set $I_2 = I_1 \cup \big(\sigma_2 \cap [k_2]\big)$ and $J_2 = J_1 \cup \big(\delta_2 \cap [k_2]\big)$. Note that $I_2 \cap J_2 = \emptyset$ and their union is  $I_2 \cup J_2 = [k_2]$. Continue inductively to obtain
\begin{itemize}
\item natural numbers $k_1 < k_2 < \cdots < k_n < \cdots$,
\item vectors $h_n \in \mathbb{H}$ with $\|h_n\| = 1$,
\item $\sigma_n \subset [k_{n-1}]^c$, $\delta_n = [k_{n-1}]^c \backslash \sigma_n$, and
\item $I_n = I_{n-1}\cup \big(\sigma_n \cap [k_n]\big)$, $J_n = J_{n-1} \cup \big(\delta_n \cap [k_n]\big)$
\end{itemize}
which satisfies both
\begin{align}\label{induc1}
\sum_{i\in I_{n-1}\cup\sigma_n}|\langle h_n,\p_i\rangle|^2 + \sum_{i \in J_{n-1} \cup \delta_n}|\langle h_n,\s_i\rangle|^2 < \dfrac{\e}{n}
\end{align}
and
\begin{align}\label{induc2}
\sum_{i=k_n+1}^\infty |\langle h_n,\p_i\rangle|^2+\sum_{i=k_n+1}^\infty|\langle h_n,\s_i\rangle|^2 < \dfrac{\e}{n}.
\end{align}
By construction $I_n \cap J_n = \emptyset$ and $I_n\cup J_n = [k_n]$ for all $n$ so that
\[ 
\bigcup_{i=1}^{\infty}I_i \sqcup \bigcup_{j=1}^{\infty}J_j = \mathbb{N},
\]
where ``$\sqcup$'' represents a disjoint union. Now set 
\[ 
\mathcal{A} = \bigcup_{i=1}^{\infty}I_i \, , \mbox{ and note } \mathcal{A}^c = \bigcup_{j=1}^{\infty}J_j.
\]
It follows by construction along with (\ref{induc1}) and (\ref{induc2}) that
\begin{align*}
\sum_{i \in \mathcal{A}} |\langle h_n, \p_i&\rangle|^2 + \sum_{i \in \mathcal{A}^c} |\langle h_n, \s_i\rangle|^2 \\
&= \bigg(\sum_{i \in I_n} |\langle h_n, \p_i\rangle|^2 + \sum_{j \in J_n} |\langle h_n, \s_i\rangle|^2\bigg) \\
& \quad +\bigg(\sum_{i \in \mathcal{A}\cap[k_n]^c} |\langle h_n, \p_i\rangle|^2 + \sum_{i \in \mathcal{A}^c\cap[k_n]^c} |\langle h_n, \s_i\rangle|^2\bigg)\\
& \leq \bigg(\sum_{i\in I_{n-1}\cup\sigma_n} |\langle h_n, \p_i\rangle|^2 + \sum_{i\in J_{n-1}\cup\delta_n} |\langle h_n, \s_i\rangle|^2\bigg)\\
&\quad + \bigg(\sum_{i=k_n+1}^\infty |\langle h_n,\p_i\rangle|^2+\sum_{i=k_n+1}^\infty|\langle h_n,\s_i\rangle|^2\bigg)\\
&< \dfrac{\e}{n} +  \dfrac{\e}{n} =  \dfrac{2\e}{n}
\end{align*}
so that a lower frame bound of $\{\p_i\}_{i\in A}\cup\{\s_i\}_{i\in A^c}$ is zero. Therefore, it is not a frame and the two original frames are not \vpa.
\end{proof}

\begin{cor}\label{cor101}
If the  frames $\{\p_i\}_{i = 1}^\infty$ and $\{\s_i\}_{i = 1}^\infty$
in  $\mathbb{H}$ are weakly woven, then there are 
disjoint finite sets $I,J \subset \mathbb{N}$ and  a constant $A>0$ so that for all
$\sigma,\delta \subset \mathbb{N} \setminus (I \cup J)$ and $\delta = \mathbb{N} \setminus (I \cup J \cup \sigma)$, the family $\{\p_i\}_{i\in I \cup \sigma} \cup \{\s_i\}_{i\in J \cup \delta}$ has lower frame bound $A$.
\end{cor}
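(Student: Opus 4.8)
The plan is to obtain this corollary as nothing more than the contrapositive of the preceding lemma, so that the only real work lies in negating the nested quantifiers correctly. First I would rewrite the hypothesis of the lemma as a single logical statement: \emph{for every} pair of disjoint finite sets $I,J \subset \mathbb{N}$ \emph{and every} $\e > 0$ \emph{there exist} $\sigma,\delta \subset \mathbb{N}\setminus(I\cup J)$ with $\delta = \mathbb{N}\setminus(I\cup J\cup\sigma)$ such that the lower frame bound of $\{\p_i\}_{i\in I\cup\sigma}\cup\{\s_i\}_{i\in J\cup\delta}$ is less than $\e$. I would then observe that the conclusion of the lemma produces a single set $\mathcal{A}\subset\mathbb{N}$ for which $\{\p_i\}_{i\in\mathcal{A}}\cup\{\s_i\}_{i\in\mathcal{A}^c}$ is not a frame; since $\{\mathcal{A},\mathcal{A}^c\}$ is a partition of $\mathbb{N}$, this conclusion is exactly the statement that the two frames are \emph{not} weakly \vpa.

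Second, I would invoke the contrapositive. If $\{\p_i\}_{i=1}^\infty$ and $\{\s_i\}_{i=1}^\infty$ are weakly \vpa, then by definition every \vpg is a frame, so in particular no partition $\{\mathcal{A},\mathcal{A}^c\}$ can yield a non-frame. Hence the conclusion of the lemma fails, and therefore its hypothesis must fail as well. Negating the quantified hypothesis above turns ``for all $I,J$, for all $\e$, there exist $\sigma,\delta$ with bound $<\e$'' into ``there exist disjoint finite $I,J\subset\mathbb{N}$ and some $\e>0$ such that for all $\sigma,\delta\subset\mathbb{N}\setminus(I\cup J)$ with $\delta=\mathbb{N}\setminus(I\cup J\cup\sigma)$ the lower frame bound of $\{\p_i\}_{i\in I\cup\sigma}\cup\{\s_i\}_{i\in J\cup\delta}$ is at least $\e$.'' Setting $A=\e$ gives precisely the asserted statement, with the index conventions on $\sigma$ and $\delta$ matching those fixed in the lemma.

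Finally, I would add one clarifying remark to fix the meaning of ``has lower frame bound $A$''. Because the frames are assumed weakly \vpa, each family $\{\p_i\}_{i\in I\cup\sigma}\cup\{\s_i\}_{i\in J\cup\delta}$ is already a frame and so already possesses some positive lower frame bound; the genuine content of the corollary is that a \emph{single uniform} constant $A>0$ serves as a lower bound simultaneously for all admissible choices of $\sigma$. I do not anticipate a real obstacle here, since the entire substance of the result was carried by the lemma. The only point demanding care is the bookkeeping of the quantifier negation and the verification that the restriction $\delta=\mathbb{N}\setminus(I\cup J\cup\sigma)$ is preserved faithfully when passing from the lemma's hypothesis to the corollary's conclusion.
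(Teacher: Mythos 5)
Your proposal is correct and coincides with the paper's intent: the paper states this corollary with no separate proof precisely because it is the contrapositive of the preceding lemma, which is exactly the argument you give. Your quantifier negation and the clarification that ``has lower frame bound $A$'' means a single uniform constant valid for all admissible $\sigma$ are both handled correctly.
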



Using Corollary \ref{cor101}, we can now prove  the main result of
this section. 

\begin{thm}\label{weakeq}
Given two frames $\{\p_i\}_{i=1}^{\infty}$ and
$\{\s_i\}_{i=1}^{\infty}$ for $\mathbb{H}$,  the following are equivalent:

\begin{enumerate}
\item[(i)]\label{weakeq1}  The two frames are \vpa.

\item[(ii)]\label{weakeq2}  The two frames are weakly \vpa.
\end{enumerate}
\end{thm}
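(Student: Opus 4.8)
The implication (i) $\Rightarrow$ (ii) is immediate, since \vpa frames are in particular weakly \vpa. For the converse the upper bound costs nothing: by Proposition \ref{upbound} every \vpg is Bessel with the universal bound $B_1+B_2$, so the entire problem is to manufacture a single lower frame bound valid for \emph{all} weavings. The plan is to feed the hypothesis into Corollary \ref{cor101}. Assuming the two frames are weakly \vpa, that corollary supplies finite disjoint sets $I_0,J_0\subset\mathbb N$ and a constant $A>0$ such that every weaving $\{\p_i\}_{i\in I_0\cup\sigma}\cup\{\s_i\}_{i\in J_0\cup\delta}$, with $\sigma\sqcup\delta=\mathbb N\setminus(I_0\cup J_0)$, has lower bound at least $A$. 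In other words, a uniform lower bound is already in hand, but only for those weavings whose restriction to the finite set $\tau:=I_0\cup J_0$ realizes the one specific pattern $I_0\to\Phi$, $J_0\to\Psi$.

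Since $\tau$ is finite, an arbitrary weaving is determined by one of finitely many assignments on $\tau$ together with its assignment on $\tau^c$. It therefore suffices to prove, for each of the $2^{|\tau|}$ splittings $\tau=P\sqcup Q$, that all weavings placing $P\to\Phi$ and $Q\to\Psi$ share a positive lower bound; the minimum over these finitely many patterns is then the desired universal bound. My first attempt would be the naive comparison: given such a weaving evaluated at $x$, replace the pattern $(P,Q)$ on $\tau$ by $(I_0,J_0)$ while keeping the tail fixed, apply the Corollary, and absorb the discrepancy. This discrepancy is governed by the fixed finite-rank quantity $F(x)=\sum_{i\in I_0}|\langle x,\p_i\rangle|^2+\sum_{i\in J_0}|\langle x,\s_i\rangle|^2$, and one only obtains an estimate of the form (lower bound)\,$\geq A\|x\|^2-F(x)$, which is useless precisely for those $x$ strongly correlated with the finitely many vectors indexed by $\tau$.

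This is where the real work lies, and I expect it to be the crux: switching the assignment of even finitely many indices is \emph{not} a controllable perturbation of the frame operator, so the single pattern delivered by the Corollary does not propagate for free to the others. To get around it I would argue each remaining pattern by contradiction, using finite-dimensional compactness on $V:=\spn\{\p_i,\s_i:i\in\tau\}$. Suppose some pattern $(P,Q)$ admits weavings with lower bound tending to $0$, witnessed by partitions $\rho_n$ (with $\rho_n\cap\tau=P$) and unit vectors $x_n$. Comparing with the good pattern on the same tails forces $\liminf_n F(x_n)\geq A$, so the projections $u_n:=P_V x_n$ are bounded away from $0$; passing to a subsequence, $u_n\to u\in V$ with $u\neq0$, while the vanishing of the $\tau$-part of the weaving forces $u\perp\p_i$ for $i\in P$ and $u\perp\s_i$ for $i\in Q$. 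The endgame is the elementary observation that the pointwise minimal weaving of a fixed vector $u$ equals $\sum_i\min\big(|\langle u,\p_i\rangle|^2,|\langle u,\s_i\rangle|^2\big)$, attained by assigning each index to whichever frame yields the smaller coefficient; if this number is $0$ for some $u\neq0$, the associated partition fails to be a frame, contradicting weak weaving.

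The genuinely delicate point, and the obstacle I would flag, is upgrading the information about $x_n$ to the strong convergence $x_n\to u$ — equivalently, driving the complementary component $x_n-u_n\in V^\perp$ to $0$ — since only then does $\{\text{weaving}\}_{\rho_n}(x_n)\to0$ transfer to $\{\text{weaving}\}_{\rho_n}(u)\to0$ and hence to $\sum_i\min(\cdots)=0$. The tail condition controls the coefficients of $x_n$ only against the \emph{chosen} vectors, so it does not by itself suppress the $V^\perp$-mass, and I anticipate that closing this gap requires rerunning the diagonalization device of the preceding Lemma, now arranged so as to concentrate the bad test vectors onto the finite-dimensional space $V$. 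Once strong convergence is secured the contradiction closes and the common lower bound $A$ (possibly after shrinking) serves every \vpg, establishing (i).
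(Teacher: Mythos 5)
Your reduction to finitely many patterns on the finite set $\tau=I_0\cup J_0$, and your use of the good pattern from Corollary \ref{cor101} to force $\liminf_n F(x_n)\ge A$ (the good and bad weavings agree on $\tau^c$, so the vanishing of the bad weaving pushes the good bound onto the $\tau$-coefficients), both match the paper's proof (its Steps 1 and 3). The genuine gap is exactly where you flagged it, but your diagnosis and remedy are both off. First, strong convergence $x_n\to u$ cannot be arranged: nothing in the hypotheses controls the $V^\perp$-components of the witnesses, so all you can extract is a weakly convergent subsequence, and there is no diagonalization that ``concentrates the bad test vectors onto $V$.'' Second, your limit object is the wrong one: $u=\lim P_Vx_n$ carries no information about the coefficients $\langle x_n,\p_i\rangle,\langle x_n,\s_i\rangle$ for $i\in\tau^c$, since those vectors do not lie in $V$; so even granting convergence of the projections you could not control the terms $\min\big(|\langle u,\p_i\rangle|^2,|\langle u,\s_i\rangle|^2\big)$ for $i\in\tau^c$.

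The good news is that your min-weaving endgame does not need strong convergence; it needs the weak limit and finite truncation. Pass to a subsequence with $x_n\rightharpoonup x$; then $P_Vx=\lim P_Vx_n\neq 0$ by your compactness argument, so $x\neq 0$. Writing $W_\rho(y)=\sum_{i\in\rho}|\langle y,\p_i\rangle|^2+\sum_{i\in\rho^c}|\langle y,\s_i\rangle|^2$, for every finite $K\subset\mathbb{N}$ the $K$-section of the minimal weaving of $x$ is dominated by the $K$-section of the $\rho_n$-weaving of $x$, hence
\[
\sum_{i\in K}\min\big(|\langle x,\p_i\rangle|^2,|\langle x,\s_i\rangle|^2\big)\le 2\,W_{\rho_n}(x_n)+2\sum_{i\in K}\big(|\langle x-x_n,\p_i\rangle|^2+|\langle x-x_n,\s_i\rangle|^2\big),
\]
and the right side tends to $0$ because weak convergence gives convergence of the finitely many coefficients indexed by $K$. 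Letting $n\to\infty$ and then $K\uparrow\mathbb{N}$ gives $\sum_i\min\big(|\langle x,\p_i\rangle|^2,|\langle x,\s_i\rangle|^2\big)=0$, and the partition $\rho=\{i:\langle x,\p_i\rangle=0\}$ yields a weaving annihilating $x\neq 0$, contradicting weak weaving. The paper closes the same gap differently: it runs a diagonal argument on the partitions $\rho_n$ themselves (its Step 2) to produce a coordinatewise-stabilized limiting partition, and then shows the weak limit is annihilated by that particular weaving via the same finite-section, weak-convergence estimates (its Step 4). Your min-weaving observation would actually make that stabilization unnecessary---but only once the transfer step is done with the weak limit and finite truncations rather than the unobtainable strong limit.
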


\begin{proof}
Only (ii) $\Rightarrow$ (i) needs to be shown. By Corollary
\ref{cor101}, there are subsets $I,J \subset \mathbb{N}$ with $|I|,|J|
< \infty$ and $I\cap J = \emptyset$, and $A>0$ satisfying: 

\bigskip
($\dagger$) For every subset $\sigma \subset \mathbb{N} \setminus (I\cup J)$ with
$\delta =  \mathbb{N} \setminus (I\cup J\cup \sigma)$ the family $\{\p_i\}_{i\in I\cup \sigma}\cup\{\s_i\}_{i\in J\cup \delta}$ has lower frame bound $A$.  

\bigskip
To simplify the notation, we permute both frames so that $I=[q]$ and $J=[m]\backslash[q]$, or one of $I$ or $J$ is empty and the other is all of $[m]$. Note that permuting does not affect \vpg as long as it is done to both frames simultaneously.

\smallskip
\noindent {\bf Step 1}:   If
for each partition $I_{\alpha},J_{\alpha}$ of $[m]$ there is a constant $D_{\alpha}>0$ so that for every $\sigma
\subset [m]^c$ and $\delta = [m]^c\setminus \sigma$ the family $\{\p_i\}_{i\in I_{\alpha}\cup \sigma}\cup\{\s_i\}_{i\in J_{\alpha}\cup \delta}$ has lower frame bound $D_{\alpha}$, then the frames are \vpa with universal lower frame bound $A_0 = \min \{D_\alpha: \alpha\}$, which is positive 
since it is a minimum of a finite number of positive numbers.

Assume that the above does not hold. That is, there is a partition $I_1,J_1$ of $[m]$
such that for every $\e >0$ there are subsets $\sigma \subset [m]^c$ and $\delta=[m]^c\backslash \sigma$ so
that a lower frame bound of $\{\p_i\}_{i\in I_1\cup \sigma}\cup\{\s_i\}_{i\in J_1 \cup \delta}$ has lower frame bound less than
$\e$.  We will show that this yields a contradiction. 

\vspace{.3cm}
\noindent {\bf Step 2:} It is shown that for all $n \in \mathbb{N}$, there are
subsets $\sigma_n \subset [m]^c$ and $\delta_n = [m]^c
\setminus \sigma_n$ and unit vectors $h_n \in  \ell^2$ so that
\begin{align}\label{E101}
 \sum_{i\in I_1 \cup \sigma_n}|\langle h_n,\p_i\rangle|^2 + \sum_{i\in J_1 \cup
 \delta_n}|\langle h_n,\s_i\rangle|^2 < \frac{1}{n}.
\end{align}
Furthermore, the sets $\sigma_n$ and $\delta_n$ satisfy the following properties.

\smallskip
\begin{enumerate}
\item[(a)]\label{a1} For every $k = 1,2,\ldots$, either $m+k \in \sigma_n$ for all $n\geq k$ or $m+k \in \delta_n$ for all $n \geq k$.

\item[(b)]\label{a2} There is a $\sigma \subset [m]^c$ with $\delta = [m]^c \setminus \sigma$ so that $m+k\in \sigma$ implies that $m+k\in \sigma_n$ for all
$n\ge k$ or if $m+k\in \delta$ then $m+k\in \delta_n$ for all $n\ge k$.
\end{enumerate}

\smallskip
\noindent \textit{Proof of Step 2.} By assumption, for each $n$ there
exist subsets $\sigma_n , \delta _n  \subset [m]^c$ with  $\delta_n = 
[m]^c\setminus \sigma_n$ such that  the lower frame bound of
$\{\p_i\}_{i\in I_1 \cup \sigma_n}\cup\{\s_i\}_{i\in J_1 \cup
  \delta_n}$ is less than $1/n$.  Hence, there  are  vectors
$h_n$  of norm one so that 
\[  \sum_{i\in I_1 \cup \sigma_n}|\langle h_n,\p_i\rangle|^2 + \sum_{i\in J_1 \cup \delta_n}|\langle h_n,\s_i\rangle|^2 < \frac{1}{n},
\mbox{ for all } n\in \mathbb{N}.\]
Note that for each $n$, either $m+1 \in \sigma_n$ or $m+1 \in \delta_n$, so we can choose a subsequence 
$L_1 = \{l_{1j}\}_{j=1}^{\infty}$ of $\mathbb{N}$ such that one of the following must hold:
\begin{itemize}
\item  For every $k\in L_1$, $m+1 \in \sigma_k$.
\smallskip
\item  For every $k\in L_1$, $m+1 \in \delta_k$.
\end{itemize}
\noindent Similarly, there are subsequences $\{L_i\}_{i=1}^{\infty}$ of $\mathbb{N}$ with $L_i = \{l_{ij}\}_{j=1}^{\infty}$ satisfying:
\begin{itemize}
\item $L_{i+1}$ is a subsequence of $L_i$ for all $i=1,2,\ldots$.
\smallskip
\item For every $i = 1,2,\ldots$, either $m+i \in \sigma_k$ for all $k \in L_i$ with $k \geq l_{i,i}$ or $m + i \in \delta_k$ for all $k \in L_i$ with $k \geq l_{i,i}$.
\end{itemize}
Now set $L = \{l_{ii}\}_{i=1}^{\infty}$, so that $i \le l_{ii}$ for all $i$ and note that this is a subsequence of $\mathbb{N}$ satisfying:
$\{l_{ii}\}_{i=k}^{\infty}$ is a subsequence of $L_k$ for every $k\in\mathbb{N}$.

\smallskip
Now reindex the sigmas, $\sigma_{l_{nn}} \mapsto \sigma_{n}$ to obtain for each $n \in \mathbb{N}$, a subset $\sigma_n \subset [m]^c$ satisfying
\begin{eqnarray}\label{E102}
 \sum_{i\in I_1 \cup \sigma_n}|\langle h_n,\p_i\rangle|^2 + \sum_{i\in J_1 \cup \delta_n}|\langle h_n,\s_i\rangle|^2 < 
 \frac{1}{l_{nn}} \le\frac{1}{n}.
 \end{eqnarray}
Note that (\ref{a1}) is satisfied by construction of $\sigma_n$. To obtain (\ref{a2}), define $\sigma$ by putting $m+k\in \sigma$ if $m+k\in \sigma_n$ for all $n\ge k$ and $m+k\in \delta$ if $m+k\in \delta_n$ for all $n\ge k$. 
\vskip12pt

\noindent {\bf Step 3:}  It will be shown that by switching to a subsequence and reindexing, it can be assumed that
\[ h_n \rightarrow_{w} h \mbox{ and } h \not= 0.\]

\smallskip
\noindent \textit{Proof of Step 3.} Since the sequence $\{h_n\}_{n=1}^\infty$ is bounded, it has a weakly convergent subsequence $\{h_{n_i}\}_{i =1}^\infty$ that converges to some $h$. Reindex, $h_{n_i} \mapsto h_i$ and $\sigma_{n_i} \mapsto \sigma_i$ and notice that the properties proved in Step 2 still hold.

Fix $k \in \mathbb{N}$ so that $k > 2/A$, where $A$ is the constant
in ($\dagger$). Note that since $\{h_n\}_{n = 1}^{\infty}$ converges
weakly to $h$, it converges in norm to $h$ on finite dimensional
subspaces. In particular, there is an $N_k\in \mathbb{N}$ so that for
all $n \geq N_k > k$, 
\begin{align}\label{E103}
\sum_{i \in [m+k]}|\langle h-h_n,\p_i\rangle|^2 + \sum_{i \in [m+k]} |\langle h-h_n,\s_i\rangle|^2 < \frac{1}{2k}.
\end{align}
Now  ($\dagger$) implies  that, for every $n$,
\begin{align} \label{E104}
\sum_{i\in I\cup \sigma_n}|\langle h_n,\p_i\rangle|^2 + \sum_{i\in J\cup \delta_n}|\langle h_n,\s_i\rangle|^2 \ge A.
\end{align}
Therefore, if $n \geq N_k > k$, then by definition of $\sigma$ and $\delta$, $\sigma_{n}\cap[m,k] = \sigma\cap [m,k]$ and $\delta_{n}\cap[m,k] = \delta\cap [m,k]$, and thus inequalities (\ref{E102}), (\ref{E103}), and (\ref{E104}),   imply
\begin{align*}
\sum_{i\in I \cup \sigma}&|\langle h,\p_i\rangle|^2 + \sum_{i\in J \cup \delta}|\langle h,\s_i\rangle|^2 \\
&\geq \sum_{i\in I \cup (\sigma \cap [m,k])}|\langle h,\p_i\rangle|^2 + \sum_{i\in J \cup (\delta \cap [m,k])}|\langle h,\s_i\rangle|^2\\
&= \sum_{i \in  I \cup (\sigma_n \cap [m,k])}|\langle h,\p_i\rangle|^2 +\sum_{i\in J\cup (\delta_n \cap [m,k]} |\langle h,\s_i\rangle|^2 \\
&\geq \dfrac{1}{2} \bigg ( \sum_{i\in  I \cup (\sigma_n \cap [m,k])}|\langle h_n,\p_i\rangle|^2 + \sum_{i\in J\cup (\delta_n \cap [m,k])} |\langle h_n,\s_i\rangle|^2  \bigg  ) \\
&\quad -\bigg ( \sum_{i\in  I \cup (\sigma_n \cap [m,k])}|\langle h-h_n,\p_i\rangle|^2 + \sum_{i\in J\cup (\delta_n \cap [m,k])} |\langle h-h_n,\s_i\rangle|^2 \bigg )\\
&\geq \dfrac{1}{2} \bigg(\sum_{i\in I\cup \sigma_n}|\langle h_n,\p_i\rangle|^2 + \sum_{i\in J\cup \delta_n}|\langle h_n ,\s_i\rangle|^2 \bigg) \\
&\quad - \dfrac{1}{2}\bigg(\sum_{i\in \sigma_n \cap [m+k]^c}|\langle h_n,\p_i\rangle|^2  +\sum_{i\in \delta_n \cap [m+k]^c}|\langle h_n,\s_i\rangle|^2\bigg) - \frac{1}{2k}\\
&\geq \dfrac{1}{2} \cdot A - \dfrac{1}{2}\cdot\dfrac{1}{n} - \dfrac{1}{2k}\\
&\geq \dfrac{A}{2} - \dfrac{1}{k} > 0
\end{align*}
showing that $h \neq 0$.

\bigskip
\noindent {\bf Step 4:}  It is now shown that
\[ \sum_{i\in I_1 \cup \sigma}|\langle h,\p_i\rangle|^2 + \sum_{i\in J_1 \cup \delta}|\langle h,\s_i\rangle|^2 =0   .\]

\smallskip
\noindent \textit{Proof of Step 4.} 
By definition of $\sigma$ and $\delta$, $\sigma_{N_k}\cap[m,k] = \sigma\cap[m,k]$ and $\delta_{N_k}\cap[m,k] = \delta\cap[m,k]$, and therefore  inequalities $(\ref{E102})$ and $(\ref{E103})$ imply that
\begin{align*}
\sum_{i\in I_1 \cup \sigma}|&\langle h,\p_i\rangle|^2 + \sum_{i\in J_1 \cup \delta}|\langle h,\s_i\rangle|^2\\
&= \lim_{k\rightarrow \infty} \bigg( \sum_{i\in I_1 \cup (\sigma \cap [m,k])}|\langle h,\p_i\rangle|^2 + \sum_{i\in J_1 \cup (\delta \cap [m,k]\})}|\langle h,\s_i\rangle|^2 \bigg)\\
&= \lim_{k\rightarrow \infty} \bigg( \sum_{i\in I_1 \cup (\sigma_{N_k} \cap [m,k])}|\langle h,\p_i\rangle|^2 + \sum_{i\in J_1 \cup (\delta_{N_k} \cap [m,k]\})}|\langle h,\s_i\rangle|^2 \bigg)\\
&\leq 2\lim_{k\to \infty} \bigg( \sum_{i\in I_1 \cup \sigma_{N_k}}|\langle h_{N_k},\p_i\rangle|^2 + \sum_{i\in J_1 \cup \delta_{N_k}}|\langle h_{N_k},\s_i\rangle|^2 \bigg)\\
&\quad + 2\lim_{k\to \infty} \bigg( \sum_{i\in [m+k]}|\langle h - h_{N_k},\p_i\rangle|^2 + \sum_{i\in [m+k]}|\langle h - h_{N_k},\s_i\rangle|^2 \bigg)\\
&\leq 2 \lim_{k \to \infty} \dfrac{1}{N_k} + 2 \lim_{k\to\infty} \dfrac{1}{2k} = 0.
\end{align*}
Therefore, $h$ is not in the span of $\{\p_i\}_{i\in I_1 \cup \sigma}\cup\{\s_i\}_{i\in J_1\cup \delta}$ and hence this \vpg is not frame. It follows that the original frames are not weakly \vpa, so a contradiction is met, concluding the proof.
\end{proof}

\begin{rmk}\label{notupbound}
This section is concluded by showing that the upper bound in
Proposition \ref{upbound} cannot be obtained for woven frames. The case of two frames
is given, but the argument is easily extended to finitely
many. Suppose that $\Phi = \{\p_i\}_{i \in I}$ and $\Psi = \{\s_i\}_{i
  \in I}$ are frames for a Hilbert space $\mathbb{H}$ with optimal
upper frame bounds $B_1$ and $B_2$. Assume by way of
contradiction that $B_1 + B_2$ is the optimal upper \vpg bound. That
is, the smallest upper \vpg bound for all possible \vpgs. Then for a
fixed $\epsilon > 0$, one can choose a $\sigma \subset I$ and $\|x\| =
1$ so that  
\[
\sum_{i \in \sigma} |\langle x , \p_i \rangle|^2 + \sum_{i \in \sigma^c} |\langle x, \s_i \rangle |^2 \geq B_1 + B_2 - \epsilon.
\]
Since
\[ \sum_{i\in I}|\langle x,\p_i\rangle|^2 +
\sum_{i\in I}|\langle x,\psi_i\rangle|^2 \le B_1+B_2,\]
it follows that
\begin{align}\label{nubeq}
\sum_{i \in \sigma^c} |\langle x , \p_i \rangle|^2 + \sum_{i \in \sigma} |\langle x, \s_i \rangle |^2 \leq \epsilon.
\end{align}
Now  (\ref{nubeq}) implies that there are \vpgs for which the lower
frame bounds approach zero. Therefore, Theorem \ref{weakeq} gives that
$\Phi$ and $\Psi$ are not \vpa, which is  a contradiction.
\end{rmk}

\section{\vpgc Riesz Bases}

In this section we classify when Riesz bases and Riesz basic sequences
can be \vpa and provide a characterization in terms of distances between subspaces. 
\smallskip

We need a lemma in the case that $\sigma$ is finite  in order to prove Theorem \ref{riesz_seq}, which is in terms of general partitions.

\begin{lem}\label{lem20}
Suppose $\{\p_i\}_{i=1}^\infty$ and $\{\s_i\}_{i=1}^\infty$ are Riesz bases for $\mathbb H$ for which there are uniform constants $0 < A \leq B < \infty$ so that for every $\sigma \subset \mathbb{N}$, the family $\{\p_i\}_{i \in \sigma} \cup \{\s_i\}_{i \in \sigma^c}$ is a Riesz sequence with Riesz bounds $A$ and $B$. Then for every $\sigma \subset \mathbb{N}$ with $|\sigma|< \infty$,
the family $\{\p_i\}_{i\in \sigma} \cup \{\s_i\}_{i\in \sigma^c}$
is actually a Riesz basis,  that is, it spans $\mathbb H$.
\end{lem}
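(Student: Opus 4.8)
The plan is to exploit the fact that, by hypothesis, the weaving $\{\p_i\}_{i\in\sigma}\cup\{\s_i\}_{i\in\sigma^c}$ is already a Riesz sequence, hence automatically a Riesz basis for its own closed span; what remains is to show that this closed span is all of $\mathbb H$, i.e.\ that the weaving is complete. Fix a finite set $\sigma$ with $|\sigma| = k$. First I would pass to the orthonormal picture using the operator characterization of Riesz bases recalled after the definition: choose an orthonormal basis $\{e_i\}_{i=1}^\infty$ of $\mathbb H$ and an invertible $T$ with $Te_i = \s_i$. Since both completeness and the Riesz-sequence property are preserved by the bounded invertible operator $T^{-1}$, it suffices to prove that $T^{-1}\big(\{\p_i\}_{i\in\sigma}\cup\{\s_i\}_{i\in\sigma^c}\big) = \{f_i\}_{i\in\sigma}\cup\{e_i\}_{i\in\sigma^c}$ spans $\mathbb H$, where $f_i := T^{-1}\p_i$; this image is still a Riesz sequence.

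Next I would split $\mathbb H = E \oplus F$ orthogonally, with $E = \cspan\{e_i : i\in\sigma^c\}$ and $F = \spn\{e_i : i\in\sigma\}$, so that $\dim F = k$. Since $E$ is contained in the closed span of the transformed weaving and $F$ is finite-dimensional, the sum $E + \spn\{f_i : i\in\sigma\}$ is automatically closed and equals that closed span. Letting $P$ denote the orthogonal projection onto $F$, a short computation shows that $E + \spn\{f_i : i\in\sigma\} = \mathbb H$ if and only if $\{Pf_i : i\in\sigma\}$ spans $F$. As there are exactly $k$ vectors $Pf_i$ in the $k$-dimensional space $F$, spanning is equivalent to linear independence, so the entire lemma reduces to showing that $\{Pf_i : i\in\sigma\}$ is linearly independent.

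The main obstacle, and the heart of the matter, is this finite-dimensional independence claim, which I would settle by contradiction using the lower Riesz bound. Suppose $\sum_{i\in\sigma}c_i Pf_i = 0$ with the $c_i$ not all zero. Then $\sum_{i\in\sigma}c_i f_i$ lies in $F^\perp = E = \cspan\{e_i:i\in\sigma^c\}$, so expanding in the orthonormal basis gives $\sum_{i\in\sigma}c_i f_i = \sum_{i\in\sigma^c} d_i e_i$ for some $(d_i)\in\ell^2(\sigma^c)$. Rearranging produces a nontrivial square-summable relation $\sum_{i\in\sigma}c_i f_i - \sum_{i\in\sigma^c} d_i e_i = 0$ among the vectors of the Riesz sequence $\{f_i\}_{i\in\sigma}\cup\{e_i\}_{i\in\sigma^c}$. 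Its lower Riesz bound then forces $A\big(\sum|c_i|^2 + \sum|d_i|^2\big) \le 0$, contradicting $c_i \neq 0$ for some $i$. Hence the projections are independent, the transformed weaving spans $\mathbb H$, and therefore so does the original weaving, which is thus a Riesz basis. The only delicate point is making sure that the finite-dimensional dependence genuinely lifts to a nontrivial $\ell^2$ relation among \emph{all} the weaving vectors, which is precisely what the uniform lower Riesz bound excludes. (Equivalently, one could avoid $T$ altogether by noting that deleting the $k$ vectors $\{\s_i:i\in\sigma\}$ from a Riesz basis leaves a closed span of codimension exactly $k$, via the biorthogonal dual basis, and then projecting $\{\p_i\}_{i\in\sigma}$ onto the $k$-dimensional complement; the independence argument is identical.)
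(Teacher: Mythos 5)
Your proof is correct, but it takes a genuinely different route from the paper's. The paper argues by induction on $|\sigma|$: assuming the weaving for $\sigma_1 = \sigma\setminus\{i_0\}$ is a Riesz basis, it supposes the weaving for $\sigma$ is not, splits into cases according to whether $\s_{i_0}$ lies in the closed span of the $\sigma$-weaving, and in the bad case derives a contradiction from the fact that deleting the single vector $\p_{i_0}$ from a Riesz sequence strictly shrinks its closed span; this inductive argument implicitly invokes the weaving hypothesis along a whole chain of subsets $\emptyset \subset \{i_1\} \subset \cdots \subset \sigma$. You instead give a direct, non-inductive proof: conjugate by $T^{-1}$ so that $\Psi$ becomes an orthonormal basis, split $\mathbb H = E \oplus F$ with $F = \spn\{e_i : i \in \sigma\}$ of dimension $k$, use the standard fact that a closed subspace plus a finite-dimensional one is closed to identify the weaving's closed span with $E + \spn\{f_i : i\in\sigma\}$, and reduce completeness to linear independence of the $k$ projections $Pf_i$ in the $k$-dimensional space $F$ --- which the lower Riesz bound of that single weaving rules out, since a dependence lifts to a nontrivial $\ell^2$ relation $\sum_{i\in\sigma} c_i f_i - \sum_{i\in\sigma^c} d_i e_i = 0$. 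What your route buys is a sharper hypothesis accounting: you only use that $\Psi$ is a Riesz basis and that the one weaving under consideration is a Riesz sequence with \emph{some} positive lower bound; neither the uniformity of $A,B$ across all $\sigma$, nor the hypothesis for other subsets, nor even the Riesz basis property of $\Phi$ enters. The paper's induction is shorter to write given standard deletion facts for Riesz sequences and matches the uniform-bound framework it needs for Theorem \ref{riesz_seq}, but it obscures this economy. Two cosmetic points in your write-up: after applying $T^{-1}$ the lower Riesz bound is no longer $A$ itself but some positive constant (e.g.\ $A/\|T\|^2$), which is all the contradiction needs; and your final inequality should read $A'\bigl(\sum_{i\in\sigma}|c_i|^2 + \sum_{i\in\sigma^c}|d_i|^2\bigr) \le 0$ with that adjusted constant.
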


\begin{proof}
We proceed by induction on the cardinality of $\sigma $. The case
$|\sigma|=0$ being  obvious, we  assume
the result holds for every $\sigma $ with $|\sigma|=n$.

Now   let $\sigma \subset \mathbb{N}$ with
$|\sigma|=n+1$ and choose  $i_0 \in \sigma$.  Let $\sigma_1 = \sigma \setminus
\{i_0\}$, 
then $\{\p_i\}_{i\in \sigma_1} \cup \{\s_i\}_{i\in \sigma_1^c}$ is a Riesz
basis by the induction hypothesis.

We proceed by way of contradiction and assume that  $\{\p_i\}_{i\in \sigma}
\cup \{\s_i\}_{i\in \sigma^c}$ is not a Riesz basis. However, it is at least a 
Riesz sequence by assumption.  If
$$
\psi _{i_0} \in \mbox{span} (\{\p_i\}_{i\in \sigma} \cup \{\s_i\}_{i\in \sigma^c}),
$$
then
$$
\overline{\mbox{span}} \left ( \{\p_i\}_{i\in \sigma} \cup \{\s_i\}_{i\in \sigma^c} \right )
\supset
\overline{\mbox{span}}\left ( \{\p_i\}_{i\in \sigma_1} \cup \{\s_i\}_{i\in \sigma_1^c}\right
) = \mathbb{H},
$$
i.e., $\{\p_i\}_{i\in \sigma} \cup \{\s_i\}_{i\in \sigma^c}$ would be
a basis, which is assumed to not be the case. So it must be that 
$$
\psi _{i_0} \notin \mbox{span} \left ( \{\p_i\}_{i\in \sigma} \cup \{\s_i\}_{i\in
\sigma^c}
\right )
$$
from which it follows that
$$
\{\p_i\}_{i\in \sigma} \cup \{\s_i\}_{i\in \sigma^c} \cup \{\psi _{i_0}\}
$$
is a Riesz sequence in $\mathbb{H}$. Hence, because $\sigma_1^c = \sigma^c \cup \{i_0\}$,
$$
\{\p_i\}_{i\in \sigma_1} \cup \{\s_i\}_{i\in \sigma_1^c}
$$
cannot be  a Riesz basis, since we obtained it by deleting the
element 
 $\p _{i_0}$ from a 
Riesz sequence, yielding a contradiction.
\end{proof}

Next, the result from the previous lemma is extended to $\sigma$ of
arbitrary cardinality.

\begin{thm}\label{riesz_seq}
Suppose $\{\p_i\}_{i=1}^\infty$ and $\{\s_i\}_{i=1}^\infty$ are Riesz bases so that there are uniform constants $0 < A \leq B < \infty$ so that for every $\sigma \subset \mathbb{N}$, the family $\{\p_i\}_{i \in \sigma} \cup \{\s_i\}_{i \in \sigma^c}$ is a Riesz sequence with Riesz bounds $A$ and $B$. Then for every $\sigma \subset \mathbb{N}$ the family $\{\p_i\}_{i\in \sigma}
\cup \{\s_i\}_{i\in \sigma^c}$ is actually a Riesz basis.
\end{thm}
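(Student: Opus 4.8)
The plan is to argue by contradiction, handling all $\sigma$ simultaneously by comparing the weaving against its finite truncations, for which Lemma~\ref{lem20} already guarantees completeness. Since the weaving $W=\{\p_i\}_{i\in\sigma}\cup\{\s_i\}_{i\in\sigma^c}$ is a Riesz sequence by hypothesis, it is a Riesz basis precisely when it is complete; so I would assume it is \emph{not} complete and produce a contradiction. Non-completeness yields a unit vector $x\in\mathbb{H}$ orthogonal to $W$, i.e.\ $\langle x,\p_i\rangle=0$ for every $i\in\sigma$ and $\langle x,\s_i\rangle=0$ for every $i\in\sigma^c$.

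Next I would introduce the truncations. For each $N\in\mathbb{N}$ set $\sigma_N=\sigma\cap[N]$, which is finite, and consider $W_N=\{\p_i\}_{i\in\sigma_N}\cup\{\s_i\}_{i\in\mathbb{N}\setminus\sigma_N}$. Because $\sigma_N\subset\mathbb{N}$, the hypothesis says $W_N$ is a Riesz sequence with the uniform bounds $A,B$, and because $\sigma_N$ is finite, Lemma~\ref{lem20} upgrades $W_N$ to a genuine Riesz basis. The point of passing to $W_N$ is exactly that a complete Riesz basis with lower Riesz bound $A$ is a frame with lower frame bound $A$, so the lower frame inequality $A\|x\|^2\le\sum_{w\in W_N}|\langle x,w\rangle|^2$ is available for \emph{every} $x$, including our $x$.

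The computation then collapses the right-hand side. Splitting the index set $\mathbb{N}$ according to whether $W_N$ uses $\p_i$ or $\s_i$: for $i\in\sigma_N\subset\sigma$ the term is $|\langle x,\p_i\rangle|^2=0$; for $i\in\sigma^c$ the term is $|\langle x,\s_i\rangle|^2=0$; the only surviving indices are $i\in\sigma\setminus\sigma_N=\sigma\cap[N]^c$, where $W_N$ uses $\s_i$. Hence $A\|x\|^2\le\sum_{i\in\sigma\cap[N]^c}|\langle x,\s_i\rangle|^2\le\sum_{i=N+1}^{\infty}|\langle x,\s_i\rangle|^2$. Since $\{\s_i\}$ is a Riesz basis and therefore Bessel, $\sum_{i}|\langle x,\s_i\rangle|^2<\infty$, so its tail tends to $0$ as $N\to\infty$; letting $N\to\infty$ forces $\|x\|=0$, contradicting $\|x\|=1$.

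The main obstacle is genuinely the infinite case, where $\sigma$ and $\sigma^c$ are both infinite and one cannot directly invoke Lemma~\ref{lem20}. The idea that removes it is the truncation together with two uniform facts: the lower bound $A$ is the \emph{same} for every $W_N$ (so it does not degrade as $N$ grows), and the Bessel property of $\{\s_i\}$ makes the error term, namely the part of $\sigma$ beyond $[N]$, a vanishing tail. Once these are in place the argument is routine; the only care needed is to verify that $W_N$ is complete before quoting the lower frame bound, which is precisely the role of Lemma~\ref{lem20}.
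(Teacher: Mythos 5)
Your proof is correct and takes essentially the same route as the paper: both assume a weaving fails to be complete, pick a nonzero vector orthogonal to it, invoke Lemma~\ref{lem20} to upgrade a finitely-truncated weaving to a Riesz basis (hence a frame with lower bound $A$), and then observe that the only surviving terms in the frame inequality form a Bessel tail of $\{\s_i\}$, forcing $x=0$. The only cosmetic difference is that you let the truncation parameter $N\to\infty$, whereas the paper fixes a single finite set $\sigma_1\subset\sigma$ with tail below $\tfrac{A}{2}\|x\|^2$ and contradicts immediately.
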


\begin{proof}
Assume, by way of contradiction, there is a $\sigma \subset \mathbb{N}$ with both $\sigma$ and $\sigma^c$ infinite, so that
$$
K = \overline{\mbox{span}} \left ( \{\p_i\}_{i\in \sigma}\cup \{\s_i\}_{i\in \sigma^c}
\right ) \not= \mathbb{H}.
$$
Choose a nonzero  $x\in K^{\perp}$.  Since $\{\s_i\}_{i\in I}$ is Bessel, by taking the tail of the series, there exists a $\sigma_1 \subset
\sigma$ with
$|\sigma_1|< \infty$ and
$$
\sum_{i \in \sigma\backslash \sigma_1} |\langle x,\s_i\rangle|^2 < \dfrac{A}{2} \|x\|^2.
$$
By Lemma \ref{lem20}, the family $\{\p_i\}_{i\in \sigma_1} \cup \{\s_i\}_{i\in \sigma\setminus \sigma_1}\cup \{\s_i\}_{i\in \sigma^c}$ is a Riesz basis with  Riesz basis bounds $A,B$
and therefore
\begin{align*}
A \|x\|^2 &\leq \sum_{i \in \sigma_1} |\langle x,\p_i\rangle|^2 + \sum_{i \in \sigma\backslash \sigma_1} |\langle x,\s_i\rangle|^2 + \sum_{i \in \sigma^c}|\langle x,\s_i\rangle|^2\\
&= \sum_{i \in \sigma\backslash \sigma_1} |\langle x,\s_i\rangle|^2\\
&\leq \dfrac{A}{2}\|x\|^2
\end{align*}
giving a contradiction.
\end{proof}

\begin{rmk}
If every \vpg of two Riesz bases is a frame sequence, it does not
follow that every \vpg is a Riesz basis, nor even a Riesz sequence. To
see this, let $\{\p_i\}_{i=1}^n$ be a Riesz basis and let $\pi$ be a
non-trivial permutation. Every \vpg $\{\p_i\}_{i\in
  I}\cup\{\p_{\pi(i)}\}_{i\in I}$ is a frame sequence but clearly does
not span the space.  
\end{rmk}

The next theorem says that if two  Riesz bases are woven, then every
weaving is in fact a Riesz basis, and not just a frame. 

\begin{thm}\label{frame_seq}
Suppose $\Phi = \{\p_i\}_{i=1}^\infty$ and $\Psi =
\{\s_i\}_{i=1}^\infty$ are Riesz bases and that  there is a uniform
constant $A > 0$ so that for every $\sigma \subset \mathbb{N}$, the
family $\{\p_i\}_{i\in\sigma} \cup \{\s_i\}_{i\in \sigma^c}$ is a
frame with lower frame bound $A$. Then for every $\sigma \subset
\mathbb{N}$, the family $\{\p_i\}_{i\in \sigma} \cup \{\s_i\}_{i\in
  \sigma^c}$ is actually a Riesz basis. 
\end{thm}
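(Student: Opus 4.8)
The plan is to show that the hypotheses force the synthesis operator of each weaving to be injective; for a frame this is exactly the property of being a Riesz basis, since a frame whose synthesis operator $T^{*}\colon\ell^2(\mathbb N)\to\mathbb H$ is injective is automatically surjective (the frame property) and hence a bounded bijection, so by the open mapping theorem it is a Riesz basis. First I would normalize the problem via Corollary \ref{parcor}: after applying $S^{-1/2}$, where $S$ is the frame operator of $\Phi$, we may assume $\Phi=\{e_i\}$ is an orthonormal basis, because the canonical Parseval frame of a Riesz basis is an orthonormal basis. Then $\s_i=Ve_i$ for some bounded invertible $V$, the family $\Psi$ remains a Riesz basis, the woven hypothesis and the lower bound $A$ are preserved, and Proposition \ref{upbound} supplies a uniform upper bound $B$. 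Identifying $\mathbb H$ with $\ell^2(\mathbb N)$ through $\{e_i\}$ and writing $Q_\rho$ for the coordinate projection onto $\rho\subseteq\mathbb N$, the synthesis operator of the weaving indexed by $\sigma$ becomes $D_\sigma=Q_\sigma+VQ_{\sigma^c}$.

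The second step is a reduction to a single compressed operator. A short computation shows that $D_\sigma c=0$ forces $Q_{\sigma^c}VQ_{\sigma^c}c=0$, so $D_\sigma$ is injective if and only if the compression $B_{\sigma^c}:=Q_{\sigma^c}VQ_{\sigma^c}$, viewed as an operator on $\ell^2(\sigma^c)$, is injective. Dually, evaluating the frame inequality on vectors $x\in\ell^2(\sigma^c)$ gives $\|B_{\sigma^c}^{*}x\|=\|D_\sigma^{*}x\|\ge\sqrt A\,\|x\|$, so the frame property, applied to \emph{every} partition, says precisely that every principal compression $B_\rho=Q_\rho VQ_\rho$ of the fixed invertible operator $V$ satisfies $\|B_\rho^{*}x\|\ge\sqrt A\,\|x\|$ for all $x\in\ell^2(\rho)$. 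It therefore suffices to prove the following statement about $V$ alone: if every principal compression $B_\rho$ of an invertible operator has this uniform lower bound on its adjoint, then every $B_\rho$ is injective.

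The crux, and the step I expect to be the main obstacle, is bridging the gap between this adjoint lower bound (which controls surjectivity of $B_\rho$, equivalently the analysis/frame side) and injectivity of $B_\rho$ itself (the synthesis/redundancy side); in infinite dimensions a surjective operator need not be injective, and the frame lower bound by itself does not preclude redundancy. I would overcome this by passing to finite truncations. Suppose $B_{\rho_0}q=0$ with $q\neq0$, so that $Q_{\rho_0}Vq=0$; setting $\tau=\rho_0\cap[N]$, the truncations $q_\tau=Q_\tau q$ satisfy $B_\tau q_\tau=Q_\tau V(q_\tau-q)\to0$ while $\|q_\tau\|\to\|q\|>0$. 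Hence the finite matrix $B_\tau$ has smallest singular value tending to $0$. Because $\ell^2(\tau)$ is finite dimensional, $B_\tau$ and $B_\tau^{*}$ share the same singular values, so for large $N$ there is a unit vector $z\in\ell^2(\tau)$ with $\|B_\tau^{*}z\|<\sqrt A$. This contradicts the uniform bound of the previous paragraph, which is available for $\rho=\tau$ because $\tau=(\tau^{c})^{c}$ is the complementary index set of the legitimate partition $\sigma=\tau^{c}$.

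With injectivity of every $D_\sigma$ established, each weaving is a frame with injective synthesis operator and is therefore a Riesz basis, which proves the theorem. I note that one could equivalently conclude by first observing that each weaving is then a Riesz sequence with uniform bounds $A,B$ and invoking Theorem \ref{riesz_seq}; the essential difficulty is identical and lies entirely in ruling out redundancy, using the finite-dimensional singular-value symmetry together with the fact that the hypothesis is available for all partitions, in particular for the cofinite ones produced by the truncation.
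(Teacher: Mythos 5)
Your proof is correct, and it takes a genuinely different route from the paper's. The paper argues in two steps: for finite $\sigma$ it inducts on $|\sigma|$, invoking the Duffin--Schaeffer theorem that a frame which loses completeness upon deletion of a single vector must be a Riesz basis; for infinite $\sigma$ it exhausts $\sigma$ by finite subsets and passes to the limit in the uniform lower Riesz bound obtained in the finite case, thereby showing the synthesis operator of the weaving is injective, hence invertible. You instead normalize $\Phi$ to an orthonormal basis via Corollary \ref{parcor}, reduce injectivity of the weaving synthesis operator $D_\sigma = Q_\sigma + VQ_{\sigma^c}$ to injectivity of the principal compression $Q_{\sigma^c}VQ_{\sigma^c}$, note that the frame lower bound for the complementary partition gives $\|(Q_\rho VQ_\rho)^*x\| \ge \sqrt{A}\,\|x\|$ on $\ell^2(\rho)$ for every $\rho$, and rule out a kernel vector by truncation together with the finite-dimensional identity $s_{\min}(M)=s_{\min}(M^*)$ for square matrices. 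Both proofs share the same endgame (injective plus surjective synthesis implies Riesz basis), and both must pass through finite sections to bridge the gap between the adjoint lower bound and injectivity --- this is the paper's finite-$\sigma$ induction on one side and your truncation step on the other --- but the mechanisms differ: the paper leans on the exact-frame theorem of \cite{DS52}, you on singular-value symmetry. Your route buys two things: it avoids the induction and the finite/infinite case split, and, since it never uses invertibility of $V$ (only boundedness), it simultaneously proves the stronger Theorem \ref{riesz_with_frame}, that a frame woven with a Riesz basis must itself be a Riesz basis. One cosmetic point: after the $S^{-1/2}$ normalization the uniform lower bound is not literally $A$ but $A/\|S\|$; this is harmless since only uniformity in $\sigma$ matters.
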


\begin{proof}
The proof is carried out in steps.

\bigskip
\noindent {\bf Step 1}:  For every $|\sigma|<\infty$, the family
$\{\p_i\}_{i\in \sigma}
\cup \{\s_i\}_{i\in \sigma^c}$ is a Riesz basis.

\smallskip
\noindent \textit{Proof of Step 1.}  We do the proof by induction on
$|\sigma|$ with
$|\sigma|=0$ clear.  
Now assume the result is true for all $|\sigma|=n$. Let $\sigma \subset I$ be so that $|\sigma| = n+1$ and let $i_0 \in \sigma$. Then $\{\p_i\}_{i \in \sigma \backslash \{i_0\}} \cup \{\s_i\}_{i \in \sigma^c \cup \{i_0\}}$ is a Riesz basis and therefore
$$
\{\p_i\}_{i \in \sigma\backslash\{i_0\}} \cup \{\s_i\}_{i \in \sigma^c}
$$ 
is a Riesz sequence spanning a subspace of codimension at least one. 

Now by assumption, $\{\p_i\}_{i \in \sigma}\cup\{\s_i\}_{i \in
  \sigma^c}$ is at least a frame. Since  the removal of the single
vector $\p_{i_0}$  yields a set that does not longer span
$\mathbb{H}$,   $\{\p_i\}_{i \in \sigma}\cup\{\s_i\}_{i \in
  \sigma^c}$  must actually be a Riesz basis~\cite{DS52}.  Furthermore,
its lower Riesz bound is $A$. This concludes the proof of Step 1. 

\bigskip

\noindent {\bf Step 2}:  Now consider the case that $|\sigma|= \infty$.

\smallskip

For this step choose $\sigma_1 \subset \sigma_2 \subset \cdots \subset
\sigma$ so that
$$
\sigma = \bigcup_{j=1}^{\infty}\sigma_j,
$$
and $|\sigma_j|< \infty$.  Now, for every $j=1,2,\ldots $ the family
$$
\{\p_i\}_{i\in \sigma_j}\cup \{\s_i\}_{i\in \sigma \backslash\sigma_j} \cup \{\s_i\}_{i\in
\sigma^c} = \{\p_i\}_{i\in \sigma_j}\cup \{\s_i\}_{i\in \sigma_j^c},
$$
is a Riesz basis with lower Riesz basis constant $A$.  If
$\{a_i\}_{i=1}^{\infty}
\in \ell^2$ and
$$
\sum_{i\in \sigma}a_i\p_i + \sum_{i\in \sigma^c}a_i\s_i = 0,
$$
then
\begin{align*}
0 &= \bigg\|\sum_{i\in \sigma}a_i\p_i + \sum_{i\in \sigma^c}a_i\s_i\bigg\|^2\\
&= \lim_{j\rightarrow \infty}\bigg\|\sum_{i\in \sigma_j}a_i\p_i + \sum_{i\in
\sigma_j^c} a_i\s_i\bigg\|^2\\
&\ge \lim_{j\rightarrow \infty} A \left ( \sum_{i\in \sigma_j}|a_i|^2
+ \sum_{i\in \sigma_j^c}|a_i|^2 \right )
\end{align*}
where the last inequality follows from the  Riesz basis property of  $\{\p_i\}_{i\in \sigma_j}\cup \{\s_i\}_{i\in \sigma_j^c}$.
So $a_i =0$ for every $i=1,2,\ldots $, implying that the synthesis operator for
the family
$\{\p_i\}_{i\in \sigma} \cup \{\s_i\}_{i\in \sigma^c}$ is bounded, linear, onto, 
and by the above it is also one-to-one.  Therefore, it is invertible and so the
family $\{\p_i\}_{i\in \sigma} \cup \{\s_i\}_{i\in \sigma^c}$ is a Riesz basis.
\end{proof}

The next results says that a frame (which is not
a Riesz basis) cannot be \vpa with a Riesz basis.

\begin{thm}\label{riesz_with_frame}
Let $\Phi = \{\p_i\}_{i=1}^\infty$ be a Riesz basis and let $\Psi = \{\s_i\}_{i=1}^\infty$ be a frame
for $\mathbb{H}$.  If $\Phi $ and $\Psi $  are \vpa ,  then $\Psi $  must actually
be a Riesz basis.
\end{thm}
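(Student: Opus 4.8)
The plan is to argue by contradiction: assume $\Psi$ is a frame that is \emph{not} a Riesz basis (i.e.\ genuinely redundant) and then produce \vpgs whose lower frame bounds tend to $0$, contradicting the uniform lower bound that the \vpa hypothesis provides. First I would normalize the problem. By Corollary~\ref{parcor} we may replace $\Phi,\Psi$ by $S^{-1/2}\Phi,\,S^{-1/2}\Psi$, where $S$ is the frame operator of $\Phi$; since an invertible operator preserves the \vpa property, sends frames to frames and Riesz bases to Riesz bases, and $\Psi$ is a Riesz basis if and only if its image is, there is no loss in assuming $\Phi$ is Parseval. A Parseval Riesz basis is an orthonormal basis, so I may assume $\Phi=\{e_i\}_{i\in\mathbb N}$. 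With this normalization the weaving quadratic form becomes transparent: for a fixed unit vector $x$ the sum $\sum_{i\in\sigma}|\langle x,e_i\rangle|^2+\sum_{i\in\sigma^c}|\langle x,\s_i\rangle|^2$ is minimized over all $\sigma$ by assigning each index $i$ to whichever term is smaller, so that
\[
\min_\sigma\Big(\sum_{i\in\sigma}|\langle x,e_i\rangle|^2+\sum_{i\in\sigma^c}|\langle x,\s_i\rangle|^2\Big)
= g(x):=\sum_{i}\min\big(|\langle x,e_i\rangle|^2,\,|\langle x,\s_i\rangle|^2\big).
\]
Thus it suffices to exhibit unit vectors $x_n$ with $g(x_n)\to 0$: the \vpg associated with the minimizing $\sigma_n$ then has lower frame bound at most $g(x_n)$, and lower bounds tending to $0$ contradict the uniform lower bound guaranteed by \vpa (equivalently, by Theorem~\ref{weakeq}).

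Second, I would manufacture such $x_n$ from the redundancy. Since $\Psi$ is a frame but not a Riesz basis it fails to be $\omega$-independent, so there is a nonzero $c=(c_i)\in\ell^2$ with $\sum_i c_i\s_i=0$. For $n\in\mathbb N$ set $\tau_n=\{i\le n:c_i\ne 0\}$ and $V_n=\cspan\{e_i:i\in\tau_n\}$, a finite-dimensional coordinate subspace, and consider the finite ``local analysis'' map $B_n\colon V_n\to\ell^2(\tau_n)$, $B_nx=(\langle x,\s_i\rangle)_{i\in\tau_n}$. For $x\in V_n$ one has $\langle x,\s_i\rangle=\langle x,P_{V_n}\s_i\rangle$, so $B_n^*(c|_{\tau_n})=P_{V_n}\sum_{i\in\tau_n}c_i\s_i=P_{V_n}T^*(c|_{\tau_n})$, where $T^*$ is the synthesis operator of $\Psi$. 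Because $T^*$ is bounded and $c|_{\tau_n}\to c$ in $\ell^2$, we get $T^*(c|_{\tau_n})\to T^*c=0$, while $\|c|_{\tau_n}\|\to\|c\|>0$; hence $B_n^*$ nearly annihilates $c|_{\tau_n}$, forcing the smallest singular value of $B_n$ to $0$. Choosing unit vectors $x_n\in V_n$ realizing this smallest singular value gives $\sum_{i\in\tau_n}|\langle x_n,\s_i\rangle|^2\to 0$. Finally, $x_n\in V_n$ forces $\langle x_n,e_i\rangle=0$ for $i\notin\tau_n$, so every term of $g(x_n)$ with $i\notin\tau_n$ vanishes and $g(x_n)\le\sum_{i\in\tau_n}|\langle x_n,\s_i\rangle|^2\to 0$, completing the contradiction.

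The conceptual steps — the reduction to an orthonormal basis and the reformulation of the optimal \vpg bound as the infimum of $g$ — are routine; the crux is the construction of $x_n$. The main obstacle is that a kernel element $c$ of the synthesis operator need not be finitely supported, so one cannot simply produce a \vpg that literally omits a direction. The remedy is the truncation: passing to the finite sections $\tau_n$ converts the exact dependence $\sum_i c_i\s_i=0$ into an approximate one, and the quantitative point to verify with care is that the truncation error $P_{V_n}T^*(c|_{\tau_n})$ tends to $0$ while the mass $\|c|_{\tau_n}\|$ stays bounded away from $0$. This is exactly what forces a genuinely small singular value of $B_n$, and hence a near-null unit vector inside the coordinate subspace $V_n$ whose $e$-support lies in $\tau_n$.
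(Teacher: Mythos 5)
Your proof is correct, and while it shares the paper's overall skeleton---normalize via Corollary~\ref{parcor} so that $\Phi$ is an orthonormal basis, then contradict the universal lower \vpg bound by exhibiting unit vectors, supported on finitely many coordinates of that basis, whose weaving sums tend to zero---the core construction is genuinely different. The paper starts from non-exactness: some $\s_1$ lies in $\cspan\{\s_i\}_{i\geq 2}$, hence within $\e$ of $\spn\{\s_i\}_{i=2}^n$ for some finite $n$; a codimension count then yields a unit vector $x\in\spn\{\p_i\}_{i=1}^n$ that is \emph{exactly} orthogonal to $\s_2,\ldots,\s_n$ and nearly orthogonal to $\s_1$, so the single weaving with $\sigma^c=[n]$ already violates the bound. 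You start instead from the equivalent failure of $\ell^2$-independence (a nonzero kernel vector $c$ of the synthesis operator), truncate it to the finite sections $\tau_n$, and extract a near-null unit vector $x_n\in V_n$ from the smallest singular value of the square matrix $B_n$, via $s_{\min}(B_n)=s_{\min}(B_n^*)\leq\|T^*(c|_{\tau_n})\|/\|c|_{\tau_n}\|\to 0$. Your route is more quantitative and avoids having to single out one redundant vector, at the cost of invoking singular-value machinery; the paper's route is more elementary, since all but one of the relevant inner products vanish identically rather than merely becoming small. Your closed-form expression for the optimal weaving bound at a unit vector, $g(x)=\sum_i\min\big(|\langle x,e_i\rangle|^2,|\langle x,\s_i\rangle|^2\big)$, is a nice observation the paper does not make explicit (it simply evaluates one particular weaving), and it cleanly packages why producing such vectors $x_n$ suffices. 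Both arguments conclude by contradicting the uniform lower bound in the definition of \vpa frames; your appeal to Theorem~\ref{weakeq} is optional, since wovenness itself supplies that uniform bound.
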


\begin{proof}
Note that by Corollary \ref{parcor}, it may be assumed that $\{\p_i\}_{i=1}^\infty$ is  an orthonormal basis. By way of contradiction, assume that $\{\s_i\}_{i=1}^\infty$
is not a Riesz basis.
Without loss of generality it may be assumed that 
$\s_1 \in \overline{\mbox{span}} \{\s_i\}_{i=2}^{\infty}$.  Now, choose $n\in \mathbb{N}$ so that
$$
0 \leq d(\s_1,\ \mbox{span} \{\s_i\}_{i=2}^{n}) \le \e
$$
and let
$$
K_n = \left [ \,\mbox{span} \{\s_i\}_{i=2}^{n}\,\right ]^{\perp}.
$$
Then $K_n$ has codimension at most  $n-1$ in $\mathbb{H}$ and since $\{\p_i\}_{i=1}^\infty$
is an orthonormal basis,
$$
\dim\ \mbox{span} \{\p_i\}_{i=1}^{n} = n.
$$
So there exists $x\in \mbox{span} \{\p_i\}_{i=1}^{n} \cap K_n$ with $\|x\|=1$.
Now, if $\sigma^c = [n]$ then
$$
\sum_{i\in \sigma}|\langle x,\p_i\rangle |^2 = 0,
$$
while
$$
\sum_{i\in \sigma^c}|\langle x,\s_i \rangle |^2 = |\langle x,\s_1\rangle |^2
\le \e.
$$
So these two families are not \vpa.
\end{proof}

The next lemma and theorem provide some geometric intuition of what it
takes for Riesz bases to \vpr. First a notion of distance between
subspaces is introduced. 

\begin{dfn}
If $W_1$ and $W_2$ are subspaces of $\mathbb{H}$, let
\[
d_{W_1}(W_2) = \inf\{ \|x - y \| : x \in W_1, \, y \in S_{W_2}\}
\]
and
\[
d_{W_2}(W_1) = \inf\{ \|x - y \| : x \in S_{W_1}, \, y \in W_2\}
\]
where $S_{W_i} = S_\mathbb{H} \cap W_i$ and $S_\mathbb{H}$ is the unit sphere in $\mathbb{H}$. Then the \emph{\bfseries distance between $W_1$ and $W_2$} is defined as
\[
d(W_1,W_2) = \min\{d_{W_1}(W_2), d_{W_2}(W_1)\}.
\]
\end{dfn}

\begin{lem}\label{lemrb}
Suppose $\{\p_i\}_{i \in I}$ and $\{\s_j\}_{j \in J}$ are Riesz sequences in $\mathbb{H}$. The following are equivalent:
\begin{enumerate}
\item[(i)] $\{\p_i\}_{i \in I} \cup \{\s_j\}_{j \in J}$ is a Riesz sequence.
\item[(ii)] $D = d\big( \overline{\mbox{span}}\{\p_i\}_{i \in I}, \overline{\mbox{span}}\{\s_j\}_{j \in J}\big) > 0$.
\end{enumerate}
\end{lem}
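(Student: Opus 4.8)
The plan is to translate the geometric condition (ii) into a two-sided lower bound for combined linear combinations, exploiting that each family is a Riesz basis for its own closed span. Write $W_1 = \cspan\{\p_i\}_{i\in I}$ and $W_2 = \cspan\{\s_j\}_{j\in J}$, and let $A_\p,B_\p$ and $A_\s,B_\s$ be the Riesz bounds of the two given sequences, so that $\{\p_i\}_{i\in I}$ is a Riesz basis for $W_1$ and $\{\s_j\}_{j\in J}$ a Riesz basis for $W_2$. For $u\in W_1$ and $v\in W_2$ I will repeatedly use the unique expansions $u=\sum_i a_i\p_i$ and $v=\sum_j b_j\s_j$ with $\ell^2$ coefficients satisfying $A_\p\sum_i|a_i|^2\le\|u\|^2\le B_\p\sum_i|a_i|^2$ and the analogous bounds for $v$.

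For (ii) $\Rightarrow$ (i), the upper bound is routine: $\|u+v\|^2\le 2\|u\|^2+2\|v\|^2\le 2\max(B_\p,B_\s)\big(\sum_i|a_i|^2+\sum_j|b_j|^2\big)$. The essential step is the lower bound, for which I would first establish the purely geometric inequalities
\[
\|u+v\|\ge D\,\|u\|\qquad\text{and}\qquad \|u+v\|\ge D\,\|v\|\qquad (u\in W_1,\ v\in W_2).
\]
Indeed, assuming $u\neq 0$, the vector $x=u/\|u\|$ lies in $S_{W_1}$ while $-v/\|u\|\in W_2$, so the definition of $d_{W_2}(W_1)$ gives $\tfrac{1}{\|u\|}\|u+v\|=\|x-(-v/\|u\|)\|\ge d_{W_2}(W_1)\ge D$, which rearranges to $\|u+v\|\ge D\|u\|$; the second inequality follows symmetrically from $d_{W_1}(W_2)$ using $y=v/\|v\|\in S_{W_2}$. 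Adding the squares yields $2\|u+v\|^2\ge D^2(\|u\|^2+\|v\|^2)$, and combining with the individual lower Riesz bounds gives $\|u+v\|^2\ge \tfrac{D^2}{2}\min(A_\p,A_\s)\big(\sum_i|a_i|^2+\sum_j|b_j|^2\big)$, the desired lower Riesz bound for the union.

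For (i) $\Rightarrow$ (ii), I argue by contraposition. Suppose the union is a Riesz sequence with lower bound $A>0$ but $D=0$; then one of the two one-sided distances vanishes, say $d_{W_2}(W_1)=0$ (the other case is symmetric, interchanging $\p$ and $\s$). Then there are unit vectors $x_n\in S_{W_1}$ and vectors $y_n\in W_2$ with $\|x_n-y_n\|\to 0$. Writing $x_n=\sum_i a_i^{(n)}\p_i$ and $y_n=\sum_j b_j^{(n)}\s_j$ in Riesz coordinates, the upper bound of $\{\p_i\}_{i\in I}$ forces $\sum_i|a_i^{(n)}|^2\ge \|x_n\|^2/B_\p=1/B_\p$. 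Since $x_n-y_n=\sum_i a_i^{(n)}\p_i+\sum_j(-b_j^{(n)})\s_j$ is a combination from the union with $\ell^2$ coefficients, the lower bound $A$ of the union gives $\|x_n-y_n\|^2\ge A\sum_i|a_i^{(n)}|^2\ge A/B_\p>0$, contradicting $\|x_n-y_n\|\to 0$.

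I expect the main obstacle to be the lower-bound step of (ii) $\Rightarrow$ (i): one must recognize that the \emph{minimum} in the definition of $d(W_1,W_2)$ is exactly what supplies separate control of $\|u\|$ and $\|v\|$ (a single one-sided distance would bound only one of them), and then patch the two geometric inequalities together before invoking the individual Riesz bounds. Everything else — the upper bound and the contrapositive direction — is a short computation in Riesz coordinates.
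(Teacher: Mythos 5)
Your proposal is correct and takes essentially the same route as the paper's proof: the key step in both is the scaling trick of dividing $u+v$ by $\|u\|$ (or $\|v\|$) so that the one-sided distances $d_{W_2}(W_1)$, $d_{W_1}(W_2)$ control the norm, combined with the interplay between the union's lower Riesz bound and the individual sequences' bounds. The only differences are cosmetic — you replace the paper's case split (whether $\sum_i|a_i|^2\ge 1/2$ or $\sum_j|b_j|^2\ge 1/2$ after normalization) by adding your two one-sided inequalities, and you phrase (i)$\Rightarrow$(ii) contrapositively where the paper argues directly to get $D^2\ge A/B$.
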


\begin{proof}
To prove (i) $\Rightarrow$ (ii), let $A$ and $B$ be lower and upper Riesz bounds of $\{\p_i\}_{i \in I} \cup \{\s_j\}_{j \in J}$, respectively. Note that each original sequence $\{\p_i\}_{i \in I}$ and $\{\s_j\}_{j \in J}$ also has these bounds. If
$$
x = \sum_{i \in I} a_i \p_i \quad \mbox{and} \quad y = \sum_{j \in J} b_j \s_j
$$
with  $\|x\| = 1$ or $\|y\| = 1$, then
\begin{align*}
\|x - y\|^2 &= \bigg\| \sum_{i \in I} a_i \p_i - \sum_{j \in J} b_j \s_j \bigg\|^2\\
&\geq A \bigg(\sum_{i \in I} |a_i|^2 + \sum_{j \in J} |b_j|^2 \bigg)\\
&\geq A \left(\dfrac{1}{B} \bigg\| \sum_{i \in I} a_i \p_i \bigg\|^2 + \dfrac{1}{B} \bigg\| \sum_{j \in J} b_j \s_j \bigg\|^2 \right)\\
&\geq \dfrac{A}{B}
\end{align*}
so that $D^2 \geq A/B$, proving $(ii)$.

Now to prove (ii) $\Rightarrow$ (i), let $A$ and $B$ be universal lower and upper Riesz bounds, respectively, of the two original sequences. Let $\{a_i\}_{i \in I}$ and $\{b_j \}_{j \in J}$ be sequences of scalars so that
\[
\sum_{i \in I} |a_i|^2 + \sum_{j \in J} |b_j|^2 = 1.
\]
First assume  that $\sum_{i \in I} |a_i|^2 \geq 1/2$.  Then
\begin{align*}
\bigg\| \sum_{i \in I} a_i \p_i + \sum_{j \in J} b_j \s_j \bigg\|^2 &= \bigg\| \sum\limits_{i \in I} a_i \p_i \bigg\|^2 \left\| \dfrac{\sum\limits_{i \in I} a_i \p_i}{\bigg\| \sum\limits_{i \in I} a_i \p_i\bigg\|} + \dfrac{\sum\limits_{j \in J} b_j \s_j}{\bigg\| \sum\limits_{i \in I} a_i \p_i\bigg\|} \right\|^2\\
&\geq A \sum_{i \in I} |a_i|^2 D^2\\
&\geq \dfrac{AD^2}{2}.
\end{align*}
 If $\sum_{j \in J} |b_j|^2 \geq 1/2$, then a similar argument works,  and thus $\{\p_i\}_{i \in I} \cup \{\s_j\}_{j \in J}$ is a Riesz sequence with lower and upper bounds $AD^2/2$ and $B$, respectively.
\end{proof}

One more lemma is needed for the proof of Theorem \ref{thmrb5}. 

\begin{lem}\label{fsupprb}
Let $W_1$ and $W_2$ be subspaces of $\mathbb{H}$ and let $\{\p_i\}_{i \in I}$ and $\{\s_j\}_{j \in J}$ be a Riesz basis for $W_1$ and $W_2$, respectively. 
Then for every $\e > 0$, there is an $x \in S_{W_1}$ and $y \in W_2$ that are finitely supported on $\{\p_i\}_{i \in I}$ and $\{\s_j\}_{j \in J}$, respectively, so that
\[
\|x - y\| \leq d_{W_2}(W_1) + \e.
\]
\end{lem}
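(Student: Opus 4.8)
The plan is to start from the definition of $d_{W_2}(W_1)$ as an infimum, extract a near-optimal pair, and then replace both vectors by finitely supported approximations, taking care to keep the first vector on the unit sphere. Concretely, by the definition of the infimum I would first choose $x_0 \in S_{W_1}$ and $y_0 \in W_2$ with
\[
\|x_0 - y_0\| \le d_{W_2}(W_1) + \frac{\e}{2}.
\]
These vectors need not be finitely supported, so essentially all of the work lies in approximating them without losing the near-optimality of the distance.

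Since $\{\p_i\}_{i \in I}$ is a Riesz basis for $W_1$ and $\{\s_j\}_{j \in J}$ is a Riesz basis for $W_2$, I can expand $x_0 = \sum_{i \in I} a_i \p_i$ and $y_0 = \sum_{j \in J} b_j \s_j$ with $\{a_i\} \in \ell^2(I)$ and $\{b_j\} \in \ell^2(J)$. For finite sets $F \subset I$ and $G \subset J$ put $x_F = \sum_{i \in F} a_i \p_i$ and $y_G = \sum_{j \in G} b_j \s_j$. The upper Riesz bound gives $\|x_0 - x_F\|^2 \le B \sum_{i \notin F} |a_i|^2$, and similarly for $y_0 - y_G$, so by enlarging $F$ and $G$ I can make $\|x_0 - x_F\|$ and $\|y_0 - y_G\|$ as small as desired; both are finitely supported and converge in norm to $x_0$ and $y_0$.

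The one genuine obstacle is that the truncation $x_F$ is finitely supported but in general is not a unit vector, whereas the conclusion demands $x \in S_{W_1}$. I would resolve this by normalizing: set $x = x_F / \|x_F\|$, which is again a finite linear combination of the $\p_i$ and hence a finitely supported element of $S_{W_1}$. Because $\|x_F\| \to \|x_0\| = 1$ as $F$ grows, the norm stays bounded away from zero for $F$ large, and from
\[
x - x_0 = \frac{x_F - \|x_F\|\, x_0}{\|x_F\|}
\]
one sees that $\|x - x_0\| \to 0$. Thus for $F$ large I may assume $\|x - x_0\| < \e/4$, and for $G$ large $\|y_0 - y_G\| < \e/4$. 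Setting $y = y_G \in W_2$, the triangle inequality
\[
\|x - y\| \le \|x - x_0\| + \|x_0 - y_0\| + \|y_0 - y\| < \frac{\e}{4} + \Big( d_{W_2}(W_1) + \frac{\e}{2} \Big) + \frac{\e}{4} = d_{W_2}(W_1) + \e
\]
completes the argument. The only subtle point is the normalization, which is justified precisely because the limiting norm $\|x_0\| = 1$ is positive; everything else is routine truncation of Riesz-basis expansions.
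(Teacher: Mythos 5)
Your proof is correct: the near-optimal pair, truncation of the Riesz-basis expansions, renormalization of the truncated vector to stay in $S_{W_1}$, and the final triangle inequality all check out. This is exactly the argument the paper has in mind --- its own ``proof'' consists of the single sentence that this is a straightforward approximation argument, and your write-up supplies those details, including the one genuinely non-routine point (that normalizing $x_F$ is harmless because $\|x_F\| \to \|x_0\| = 1$).
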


\begin{proof}
The proof is a  straightforward approximation argument. 
\end{proof}

The following theorem gives a geometric characterization of woven
Riesz bases. 

\begin{thm}\label{thmrb5}
If $\Phi=\{\p_i\}_{i=1}^\infty$ and $\Psi=\{\s_i\}_{i=1}^\infty$ are Riesz bases in $\mathbb{H}$, then the following are equivalent:
\begin{enumerate}
\item[(i)] $\Phi$ and $\Psi$ are \vpa.
\item[(ii)] For every $\sigma \subset \mathbb{N}$, $D_\sigma = d\big( \overline{\mbox{span}}\{\p_i\}_{i \in \sigma}, \overline{\mbox{span}}\{\s_i\}_{i \in \sigma^c}\big) > 0$.
\item[(iii)] There is a constant $C > 0$ so that for every $\sigma \subset \mathbb{N}$, $$D_\sigma = d\big( \overline{\mbox{span}}\{\p_i\}_{i \in \sigma}, \overline{\mbox{span}}\{\s_i\}_{i \in \sigma^c}\big) \geq C.$$
\end{enumerate}
\end{thm}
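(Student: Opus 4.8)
The plan is to prove the cycle of implications (i) $\Rightarrow$ (ii) $\Rightarrow$ (iii) $\Rightarrow$ (i). The implication (iii) $\Rightarrow$ (ii) is immediate from the definitions, since a uniform lower bound $C$ in particular makes each $D_\sigma$ positive, so it need not be treated separately. The two routine directions are (i) $\Rightarrow$ (ii) and (iii) $\Rightarrow$ (i), both of which are bookkeeping layered on top of Lemma~\ref{lemrb} and the structure theorems for woven Riesz bases; the genuinely new content, and the main obstacle, is the passage from the pointwise positivity in (ii) to the uniform bound in (iii).

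For (i) $\Rightarrow$ (ii): if $\Phi$ and $\Psi$ are \vpa, then by Theorem~\ref{frame_seq} every weaving $\{\p_i\}_{i\in\sigma}\cup\{\s_i\}_{i\in\sigma^c}$ is in fact a Riesz basis, hence a Riesz sequence. Since $\{\p_i\}_{i\in\sigma}$ and $\{\s_i\}_{i\in\sigma^c}$ are themselves Riesz sequences (subfamilies of Riesz bases), Lemma~\ref{lemrb} applies and yields $D_\sigma>0$ for every $\sigma$, which is (ii). For (iii) $\Rightarrow$ (i): let $A,B$ be common Riesz bounds for $\Phi$ and $\Psi$. Given $D_\sigma\ge C$ for all $\sigma$, the implication (ii) $\Rightarrow$ (i) of Lemma~\ref{lemrb} shows that each weaving is a Riesz sequence with lower bound at least $AC^2/2$ and a uniform upper bound, both independent of $\sigma$. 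Theorem~\ref{riesz_seq} then upgrades every such weaving to a genuine Riesz basis, and a Riesz basis is in particular a frame whose frame bounds are controlled by its (uniform) Riesz bounds; hence every weaving is a frame with $\sigma$-independent bounds, i.e.\ $\Phi$ and $\Psi$ are \vpa.

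The heart of the proof is (ii) $\Rightarrow$ (iii), which I would argue by contraposition: assuming $\inf_\sigma D_\sigma=0$, I would construct a single partition $\sigma$ with $D_\sigma=0$, contradicting (ii) through Lemma~\ref{lemrb}. Choosing $\sigma_n$ with $D_{\sigma_n}\to 0$, Lemma~\ref{lemrb} forces the lower Riesz bound of the $\sigma_n$-weaving to tend to $0$, so there are $\ell^2$-unit coefficient sequences whose synthesis $\sum_{i\in\sigma_n}c^{(n)}_i\p_i+\sum_{i\in\sigma_n^c}c^{(n)}_i\s_i$ has norm tending to $0$; Lemma~\ref{fsupprb} then lets me take each witness finitely supported, at the cost of an arbitrarily small error. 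I would patch these finite configurations together, in the spirit of the gluing lemma preceding Corollary~\ref{cor101} and of Step~2 of Theorem~\ref{weakeq}, into one partition $\sigma$ that agrees with $\sigma_n$ on the block supporting the $n$-th witness, so that the global weaving reproduces each near-null synthesis and therefore has lower Riesz bound $0$. The main obstacle is exactly this patching: the finitely supported witnesses need not have disjoint supports, so one must pass to a subsequence and arrange the supports on essentially disjoint index blocks (using tail and Bessel estimates to absorb the overlaps) while keeping the definition of $\sigma$ consistent across blocks. This is the same bookkeeping difficulty that made Theorem~\ref{weakeq} delicate, now carried out on the synthesis side rather than the analysis side.
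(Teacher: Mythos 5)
Your cycle structure and your treatment of the two routine implications are correct and coincide with the paper's argument: (i) $\Rightarrow$ (ii) follows from Theorem~\ref{frame_seq} plus Lemma~\ref{lemrb}, and (iii) $\Rightarrow$ (i) from Lemma~\ref{lemrb} plus Theorem~\ref{riesz_seq} (your constant $AC^2/2$ is in fact the correct one). The gap is in (ii) $\Rightarrow$ (iii). Your patching mechanism requires that, after passing to a subsequence, the finitely supported witnesses can be placed on essentially disjoint index blocks, with only a small overlap to be absorbed by tail and Bessel estimates. But that is exactly the assertion that the witnesses put asymptotically no mass on any fixed finite index set, i.e.\ that $x_n \rightarrow_w 0$ and $y_n \rightarrow_w 0$, and this cannot be arranged by bookkeeping alone. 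Nothing in your setup prevents concentration: for instance $x_n = \p_1$ for every $n$ (with $1 \in \sigma_n$) and $y_n \in \overline{\mbox{span}}\{\s_i\}_{i\in\sigma_n^c}$ with $\|\p_1 - y_n\| \rightarrow 0$, where moreover the supports of the $y_n$ pile up on indices whose membership flips among the various $\sigma_n$. In such a situation no subsequence has essentially disjoint supports, the overlaps carry mass bounded away from zero, and there is no consistent blockwise definition of $\sigma$; your construction stalls precisely where the mass sits. Note also that the gluing lemma preceding Corollary~\ref{cor101} cannot simply be imitated here: its hypothesis allows one to choose bad weavings extending any prescribed finite prefix, whereas in (ii) $\Rightarrow$ (iii) the bad sets $\sigma_n$ are handed to you and cannot be prescribed on a fixed prefix.

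The missing idea — the paper's Step 1 — is a weak-compactness dichotomy, and it is where hypothesis (ii) actually enters the argument (it is not mere bookkeeping). Pass to a subsequence along which the memberships stabilize (for each $i$, either $i \in \sigma_n$ for all large $n$ or $i \in \sigma_n^c$ for all large $n$; this is the diagonal argument of Theorem~\ref{weakeq}), let $\sigma$ be the resulting limit partition, and extract weak limits $x_n \rightarrow_w x$ and $y_n \rightarrow_w y$. Weak convergence together with the Riesz basis property gives coefficientwise convergence, whence $x \in \overline{\mbox{span}}\{\p_i\}_{i\in\sigma}$ and $y \in \overline{\mbox{span}}\{\s_i\}_{i\in\sigma^c}$, while $\|x_n - y_n\| \rightarrow 0$ forces $x = y$. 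If $x \neq 0$, then $x/\|x\|$ lies in both closed spans, so $D_\sigma = 0$, contradicting (ii) at once; this is how the concentration case is eliminated. Only after this, knowing $x_n \rightarrow_w 0$ and $y_n \rightarrow_w 0$, does your gliding-hump patching go through (this is the paper's Step 2): one extracts successive blocks $\eta_k \subset \sigma$ and $\mu_k \subset \sigma^c$ carrying almost all of the $k$-th witness and concludes $D_\sigma = 0$ for this same limit partition $\sigma$, the final contradiction. With this case distinction inserted before the patching, your outline becomes the paper's proof; without it, the patching step would fail.
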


\begin{proof}
The implication  (i) $\Rightarrow$ (iii) follows from the proof of
Lemma \ref{lemrb} since if $A$ and $B$ are universal weaving bounds and $C=A/B$, then $D_\sigma \geq C$ for all $\sigma \subset \mathbb{N}$.

Conversely, for the implication (iii) $\Rightarrow$ (i), the proof of
Lemma \ref{lemrb} can be applied again, since if $A$ and $B$ are universal Riesz bounds of $\Phi$ and $\Psi$, then $AD_\sigma/2$ and $B$, are Riesz bounds for $\{\p_i\}_{i \in \sigma} \cup
\{\s_i\}_{i \in \sigma^c}$. Because $C \leq D_\sigma$, each weaving has Riesz bounds $AC/2$ and $B$, which are independent of $\sigma$. Furthermore, Theorem \ref{riesz_seq} gives that each \vpg is actually a Riesz basis,  and thus $\Phi$ and $\Psi$ are \vpa.


Since (iii) $\Rightarrow$ (ii) is obvious, all that remains is (ii)
$\Rightarrow$ (iii). Assume (ii) and by way of contradiction assume
that for every $n \in \mathbb{N}$, there are subsets $\sigma_n \subset
\mathbb{N}$ and elements $x_n, y_n \in \mathbb{H}$ of the form 
\[
x_n = \sum_{i \in \sigma_n} a_{in} \p_i \quad \mbox{and} \quad y_n = \sum_{i \in \sigma_n^c} b_{in} \s_i
\]
with either $\|x_n\|=1$ or $\|y_n\|=1$ that satisfy $\|x_n - y_n\|
\leq 1/n$. Without loss of generality, it may be assumed that
$\|x_n\|=1$ for all $n$, by passing  to a subsequence and  possibly switching the roles of $\Phi$ and
$\Psi$. Furthermore, by Lemma
\ref{fsupprb}, $x_n$ and $y_n$ may be assumed to be finitely
supported, and by switching to another subsequence and reindexing it
may be assumed that  
\[
x_n \rightarrow_{w} x = \sum_{i = 1}^\infty a_i \p_i \quad \mbox{and} \quad y_n \rightarrow_{w} y = \sum_{i = 1}^\infty b_i \s_i.
\]
The rest of the proof will be done in two steps.

\smallskip

\noindent {\bfseries Step 1:} $x = y = 0$.

\smallskip
\noindent \textit{Proof of Step 1.} For every $n \in \mathbb{N}$, either $1 \in \sigma_n$ or $1 \in \sigma_n^c$, so by switching to a subsequence, it may be assumed that $1 \in \sigma_n$ for all $n$ or that $1 \in \sigma_n^c$ for all $n$. Furthermore, switching to yet another subsequence, it may be assumed that $2 \in \sigma_n$ for all $n \geq 2$ or $2 \in \sigma_n^c$ for all $n \geq 2$. Continuing, a subsequence can be found (call it $\sigma_n$ again) satisfying:
\begin{itemize}
\item for every $i \leq k$, either $i \in \sigma_n$ for all $n \geq k$ or $i \in \sigma_n^c$ for all $n \geq k$.
\end{itemize}
Now define $\sigma \subset \mathbb{N}$ by
$$
\sigma = \{i \in \mathbb{N}: i \in \sigma_n \mbox{ for infinitely many }n \}.
$$
Since $x_n \rightarrow_{w}x$ and $\Phi$ is a Riesz bases, $a_{in} \to
a_i$ for any fixed $i$. Therefore, if $i \in \sigma$ is fixed, 
then $i \not \in \sigma _n^c$ and  $b_{in} = 0$ for all sufficiently
large $n$. Likewise,  if $i \in \sigma^c$, then
$a_{in} = 0$ for all large $n$. Hence, if $i \in \sigma$, then $b_i =
0$, and if $i \in \sigma^c$, then $b_i = 0$. 
Consequently, 
\begin{align}\label{xys}
x = \sum_{i \in \sigma} a_i \p_i \quad \mbox{and} \quad y = \sum_{i \in \sigma^c} b_i \s_i.
\end{align}
Next, since $x_n - y_n$ converges in norm to zero, it also does so weakly and thus 
\begin{align}\label{xy0}
\|x - y\| \leq \liminf_{n\to\infty} \|x_n - y_n\| = 0
\end{align}
implying that $x = y$. Finally, the assumption 
\[
d\big( \overline{\mbox{span}}\{\p_i\}_{i \in \sigma}, \overline{\mbox{span}}\{\s_i\}_{i \in \sigma^c}\big) > 0
\]
gives that $x = y = 0$ by (\ref{xys}) and (\ref{xy0}). Thus, $x_n \rightarrow_{w} 0$ and $y_n \rightarrow_{w} 0$.

\bigskip
\noindent {\bfseries Step 2:} $d\big( \overline{\mbox{span}}\{\p_i\}_{i \in \sigma}, \overline{\mbox{span}}\{\s_i\}_{i \in \sigma^c}\big) = 0$, giving a contradiction.

\smallskip
\noindent \textit{Proof of Step 2.}  First some notation is introduced. If $\eta$ and $\mu$ are finite subsets of $\mathbb{N}$, write $\eta \prec \mu$ if 
$$
\max\{i: i \in \eta\} \leq \min \{i : i \in \mu\}.
$$
Since $x_n$ and $y_n$ are finitely supported, there are finite subsets of $\mathbb{N}$, $\{\eta_k\}_{i = 1}^\infty$ subsets of $\sigma$, and $\{\mu_k\}_{k=1}^\infty$ subsets of $\sigma^c$ satisfying for every $k \in \mathbb{N}$:
\begin{enumerate}
\item[(a)] $\eta_k \cap \mu_k = \emptyset$.
\item[(b)] $\eta_k, \mu_k \prec \eta_{k+1}, \mu_{k+1}$.
\item[(c)] There is an $n_k$ so that 
$$
\bigg\| \sum_{i \in \bigcup_{j=1}^{k-1} \eta_j} a_{in_k} \p_i \bigg\| < \frac{1}{k} \quad \mbox{and} \quad \bigg\| \sum_{i \in \bigcup_{j=1}^{k-1} \mu_j} b_{in_k} \s_i \bigg\| < \frac{1}{k}.
$$
\end{enumerate}
With these properties in hand along with the fact that $x_{n_k}$ and $y_{n_k}$ are finitely supported, a standard epsilon thirds argument gives that
\[
d\big( \overline{\mbox{span}}\{\p_i\}_{i \in \eta_k}, \overline{\mbox{span}}\{\s_i\}_{i \in \mu_k}\big) \leq C_k
\]
where $C_k \to 0$ as $k \to \infty$, but
\[
d\big( \overline{\mbox{span}}\{\p_i\}_{i \in \sigma}, \overline{\mbox{span}}\{\s_i\}_{i \in \sigma^c}\big) \leq d\big( \overline{\mbox{span}}\{\p_i\}_{i \in \eta_k}, \overline{\mbox{span}}\{\s_i\}_{i \in \mu_k}\big)
\]
for all $k$, giving the desired contradiction.
\end{proof}

\begin{rmk}
In the literature one finds several notions for the distance or the
angle between two subspaces of a Hilbert space. For instance, the
angle between two subspaces $W_1, W_2 \subset \mathbb{H}$ is defined
as $$\alpha (W_1,W_2) =  \inf  \left\{\arccos\bigg(\dfrac{|\langle
    v,w\rangle|}{\|v\|\|w\|}\bigg) : v \in W_1, w \in W_2\right\},$$
see, e.g., ~\cite{deutsch}.  It is easy to see that $\alpha (W_1, W_2) 
> 0$ if and only if $d(W_1,W_2) >0$. Consequently,
Theorem~\ref{thmrb5} could also be formulated by means of the angle
between subspaces: two Riesz bases  $\Phi$ and $\Psi$
are \vpa , if and only if for every $\sigma \subset \mathbb{N}$, $
\alpha \big( \overline{\mbox{span}}\{\p_i\}_{i \in \sigma}, \overline{\mbox{span}}\{\s_i\}_{i \in \sigma^c}\big) > 0$.
\end{rmk}


\section{\vpgsc and Pertubations}

In this section it is shown frames that are small perturbations of
each other are \vpa. We state the results for two frames, but then give how they can be generalized to any finite number of them.

\begin{thm}\label{thm1}
Let $\Phi=\{\p_i\}_{i\in I}$ (respectively, $\Psi=\{\s_i\}_{i\in I}$) be frames for a
Hilbert space $\mathbb{H}$ with frame
bounds $A_1,B_1$ (respectively, $A_2,B_2$).  Assume there is a $0<\lambda <1$
so that
$$
\lambda (\sqrt{B_1}+\sqrt{B_2}) \le \frac{A_1}{2}
$$
and for all sequences of scalars $\{a_i\}_{i\in I}$ we have
\begin{equation}\label{eqn1}
\bigg\|\sum_{i\in I}a_i(\p_i -\s_i)\bigg\|\le \lambda \|\{a_i\}_{i\in I}\|.
\end{equation}
Then for every $\sigma \subset I$, the family $\{\p_i\}_{i\in \sigma^c}\cup
\{\s_i\}_{i\in \sigma}$
is a frame for $\mathbb{H}$ with frame bounds $\frac{A_1}{2},B_1+B_2$. That is, $\Phi$ and $\Psi$ are \vpa. 
\end{thm}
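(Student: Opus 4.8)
The plan is to verify the frame inequalities for an arbitrary weaving $\{\p_i\}_{i\in\sigma^c}\cup\{\s_i\}_{i\in\sigma}$ directly, exploiting the closeness hypothesis \eqref{eqn1} together with the individual frame bounds of $\Phi$ and $\Psi$. The upper bound is the easy half: by Proposition \ref{upbound}, every weaving of two Bessel sequences with bounds $B_1,B_2$ is Bessel with bound $B_1+B_2$, which is exactly the claimed upper frame bound, so nothing beyond that proposition is needed here.

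The substance is the lower bound $\tfrac{A_1}{2}\|x\|^2$. The idea I would pursue is to start from the fact that $\Phi$ alone satisfies $A_1\|x\|^2\le\sum_{i\in I}|\langle x,\p_i\rangle|^2$, split this into the two index blocks $\sigma$ and $\sigma^c$, and then replace the $\p_i$ by $\s_i$ on the block indexed by $\sigma$, controlling the error via \eqref{eqn1}. Concretely, for fixed $x$ write $a_i=\langle x,\p_i-\s_i\rangle$ for $i\in\sigma$ (and $0$ otherwise); then the perturbation inequality applied to these scalars, combined with the Cauchy--Schwarz/duality identity $\|\{a_i\}\|=\sup_{\|y\|=1}|\sum a_i\langle y,\cdot\rangle|$ or more simply the adjoint form of \eqref{eqn1}, should yield
\[
\bigg(\sum_{i\in\sigma}|\langle x,\p_i\rangle|^2\bigg)^{1/2}
-\bigg(\sum_{i\in\sigma}|\langle x,\s_i\rangle|^2\bigg)^{1/2}
\le \lambda\|x\|.
\]
The cleanest route is to interpret \eqref{eqn1} as the statement that the synthesis operator of $\{\p_i-\s_i\}$ has norm at most $\lambda$, hence so does its adjoint analysis operator, giving $\big(\sum_{i\in I}|\langle x,\p_i-\s_i\rangle|^2\big)^{1/2}\le\lambda\|x\|$ for every $x$; restricting this sum to $i\in\sigma$ only shrinks it, so the same bound holds over $\sigma$.

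Assembling the pieces via the triangle inequality in $\ell^2$, I would estimate
\[
\sqrt{A_1}\,\|x\|
\le\bigg(\sum_{i\in I}|\langle x,\p_i\rangle|^2\bigg)^{1/2}
\le\bigg(\sum_{i\in\sigma^c}|\langle x,\p_i\rangle|^2\bigg)^{1/2}
+\bigg(\sum_{i\in\sigma}|\langle x,\s_i\rangle|^2\bigg)^{1/2}
+\lambda\|x\|,
\]
where the last term absorbs the switch from $\p_i$ to $\s_i$ on $\sigma$. Denoting by $W(x)^{1/2}=\big(\sum_{\sigma^c}|\langle x,\p_i\rangle|^2+\sum_{\sigma}|\langle x,\s_i\rangle|^2\big)^{1/2}$ the weaving square-function and using $\lambda\le\lambda(\sqrt{B_1}+\sqrt{B_2})\le A_1/2\le\sqrt{A_1}/2$ (valid once $A_1\le 1$, so I should instead carry the hypothesis in the form $\lambda(\sqrt{B_1}+\sqrt{B_2})\le A_1/2$ and bound the transferred term by $\lambda\sqrt{B_2}\|x\|$ or $\lambda\sqrt{B_1}\|x\|$ rather than by $\lambda\|x\|$), I get $W(x)^{1/2}\ge(\sqrt{A_1}-\lambda)\|x\|$, and squaring yields the lower bound. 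The main obstacle is the bookkeeping that makes the leftover term match the hypothesis $\lambda(\sqrt{B_1}+\sqrt{B_2})\le A_1/2$ so that the final lower bound comes out as exactly $A_1/2$; the right way to make this work is to bound the transferred error by $\lambda$ times $\sqrt{B_1}$ or $\sqrt{B_2}$ (using that $\|\{\langle x,\p_i-\s_i\rangle\}\|$ is comparable to $\sqrt{B}\|x\|$ on each block) so that, after squaring and invoking the stated inequality on $\lambda$, one lands precisely on $\tfrac{A_1}{2}\|x\|^2$. Since $x$ and $\sigma$ were arbitrary, this establishes that $\Phi$ and $\Psi$ are woven with the claimed bounds.
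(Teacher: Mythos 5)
Your upper bound (via Proposition \ref{upbound}) and your key observation about (\ref{eqn1}) are both correct: (\ref{eqn1}) says the synthesis operator of $\{\p_i-\s_i\}_{i\in I}$ has norm at most $\lambda$, hence so does its adjoint, giving $\big(\sum_{i\in\sigma}|\langle x,\p_i-\s_i\rangle|^2\big)^{1/2}\le\lambda\|x\|$, and then by the reverse triangle inequality in $\ell^2(\sigma)$ one has $|P-Q|\le\lambda\|x\|$, where $P=\big(\sum_{i\in\sigma}|\langle x,\p_i\rangle|^2\big)^{1/2}$ and $Q=\big(\sum_{i\in\sigma}|\langle x,\s_i\rangle|^2\big)^{1/2}$. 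The gap is in the assembly. Writing $C=\big(\sum_{i\in\sigma^c}|\langle x,\p_i\rangle|^2\big)^{1/2}$, your additive chain only yields $C+Q\ge(\sqrt{A_1}-\lambda)\|x\|$, and the most one can conclude from that is $C^2+Q^2\ge\tfrac12(C+Q)^2\ge\tfrac12(\sqrt{A_1}-\lambda)^2\|x\|^2$; your displayed claim $W(x)^{1/2}\ge(\sqrt{A_1}-\lambda)\|x\|$ already drops this factor of $\sqrt{2}$. Since $\lambda>0$, the bound $\tfrac12(\sqrt{A_1}-\lambda)^2$ is strictly below the claimed $A_1/2$ (it is only guaranteed to be about $A_1/8$), so the theorem's stated lower frame bound is not reached. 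No additive bookkeeping can repair this: the hypothesis $\lambda(\sqrt{B_1}+\sqrt{B_2})\le A_1/2$ is designed to control a \emph{product} of the perturbation size with the Bessel bounds, and your proposed patch---bounding the transferred error term by $\lambda\sqrt{B_1}\|x\|$ or $\lambda\sqrt{B_2}\|x\|$---is not justified by (\ref{eqn1}) (which gives exactly $\lambda\|x\|$) and is in any case not the quantity that needs to appear.

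The one-line repair that completes your strategy is multiplicative rather than additive: subtract squares and factor. Indeed,
\[
C^2+Q^2=(C^2+P^2)-(P-Q)(P+Q)\ge A_1\|x\|^2-\lambda\|x\|\cdot\big(\sqrt{B_1}+\sqrt{B_2}\big)\|x\|\ge\frac{A_1}{2}\|x\|^2,
\]
using $C^2+P^2\ge A_1\|x\|^2$, $|P-Q|\le\lambda\|x\|$, and $P+Q\le(\sqrt{B_1}+\sqrt{B_2})\|x\|$. With this step your square-function argument proves the theorem with the exact stated bounds, and it then constitutes a genuinely different, arguably more elementary, route than the paper's: the paper works at the level of frame operators, telescoping $T_\sigma T_\sigma^{*}-R_\sigma R_\sigma^{*}=T_\sigma(T_\sigma^{*}-R_\sigma^{*})+(T_\sigma-R_\sigma)R_\sigma^{*}$ to get $\|T_\sigma T_\sigma^{*}x-R_\sigma R_\sigma^{*}x\|\le\lambda(\sqrt{B_1}+\sqrt{B_2})\|x\|\le\tfrac{A_1}{2}\|x\|$, and then bounding the weaving frame operator from below by $\|S_\Phi x\|-\tfrac{A_1}{2}\|x\|\ge\tfrac{A_1}{2}\|x\|$. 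That telescoping identity is precisely the operator-level analogue of the difference-of-squares step your write-up is missing.
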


\begin{proof}
The proof will be carried out in steps, but first some notation is introduced. Let $T$
(respectively $R$) be the
synthesis operator for the frame $\{\p_i\}_{i\in I}$ (respectively,
$\{\s_i\}_{i\in I}$), and let $P_\sigma$ denote the orthogonal projection onto $\overline{\mbox{span}} \{e_i\}_{i \in \sigma}$, where $\{e_i\}_{i \in I}$ is the standard orthonormal basis for $\ell^2(I)$ and $\sigma \subset I$.
For each $\sigma \subset I$ let
$$
T_{\sigma}(\{a_i\}_{i\in I}) = TP_{\sigma}(\{a_i\}_{i\in I}) = \ \sum_{i\in \sigma}a_i\p_i,
$$
and
$$
R_{\sigma}(\{a_i\}_{i\in I}) = RP_{\sigma}(\{a_i\}_{i\in I}) = \sum_{i\in \sigma}a_i\s_i.
$$
In this notation, notice that the inequality in (\ref{eqn1}) becomes
$$
\|T-R\| \le \lambda.
$$
Observe that  $\|T_{\sigma}-R_{\sigma}\|\le
\|T-R\|$, and
$\|T_{\sigma}\| \le \|T\|$ and $\|R_{\sigma}\|\le \|R\|$ since $T_\sigma = TP_{\sigma}$ and $R_\sigma = RP_{\sigma}$.

\smallskip
\noindent {\bf Step 1}: For every $x\in \mathbb{H}$,
$$
\bigg\|\sum_{i\in \sigma}\langle x,\s_i\rangle \s_i - \sum_{i\in \sigma}\langle
x,\p_i\rangle \p_i\bigg\|=\|T_{\sigma}T_{\sigma}^{*}x - R_{\sigma}R_{\sigma}^{*}x\|
\le \frac{A_1}{2}\|x\|.
$$

\smallskip
\noindent \textit{Proof of Step 1.} Computing gives for every $x \in \mathbb{H}$ 
\begin{align*}
\|T_{\sigma}T_{\sigma}^{*}x - R_{\sigma}R_{\sigma}^{*}x\| 
&\le \|(T_{\sigma}T_{\sigma}^{*} - T_{\sigma}R_{\sigma}^{*})x\| +
\|(T_{\sigma}R_{\sigma}^{*}-R_{\sigma}R_{\sigma}^{*})x\|\\
&\le \|T_{\sigma}\|\|T_{\sigma}^{*}-R_{\sigma}^{*}\|\|x\|+
\|T_{\sigma}-R_{\sigma}\|\|R_{\sigma}^{*}\|\|x\|\\
&\le \|T\|\|T-R\|\|x\| + \|T-R\|\|R\|\|x\|\\
&\le (\|T\|\lambda + \|R\|\lambda)\|x\|\\
&\le \lambda (\|T\|+\|R\|)\|x\|\\
&\le \lambda (\sqrt{B_1} + \sqrt{B_2})\|x\|\\
&\le \frac{A_1}{2}\|x\|.
\end{align*}

\smallskip
\noindent {\bf Step 2}: The lower frame bound is $A_1/2$ for every \vpg.

\smallskip
\noindent \textit{Proof of Step 2.} For every $x\in \mathbb{H}$, it follows by applying Step 1 that
\begin{align*}
\bigg\|\sum_{i\in \sigma}\langle x,\s_i\rangle \s_i + &\sum_{i\in \sigma^c}\langle x,\p_i\rangle \p_i\bigg\| 
\\
&=\bigg\|\sum_{i\in I}\langle x,\p_i\rangle \p_i + \left(\sum_{i \in \sigma}\langle x,\s_i\rangle \s_i  - \sum_{i \in \sigma} \langle x,\p_i\rangle \p_i\right)\bigg\|\\
&\geq \bigg\|\sum_{i \in I}\langle x,\p_i\rangle \p_i\bigg\| - \bigg\|\sum_{i\in \sigma}\langle x,\s_i\rangle \s_i - \sum_{i \in \sigma} \langle x,\p_i\rangle \p_i\bigg\|\\
&\ge A_1\|x\| - \bigg\|\sum_{i\in \sigma}\langle x,\s_i\rangle \s_i - \sum_{i\in
\sigma}\langle
x,\p_i\rangle \p_i\bigg\|\\
&\ge A_1\|x\| - \frac{A_1}{2}\|x\| \\
&= \frac{A_1}{2}\|x\|.
\end{align*}

\smallskip
The upper frame bound of $ \{\s_i\}_{i\in \sigma} \cup
\{\p_i\}_{i\in\sigma^c}$ is at most $B_1+B_2$ by
Proposition~\ref{upbound}. Thus $\Phi $ and $\Psi $ are woven. 
\end{proof}

\begin{rmk}
Theorem \ref{thm1} can be generalized to a finite number of frames by
either requiring each
of the frames to be very close to one of them or by creating a ``chain''
where the first is close
to the second, the second to the third, and so on.
\end{rmk}

In the next proposition the perturbed frames is obtained as the image
of a bounded, invertible operator of a given frame.  

\begin{prop}\label{popw}
Let $\{\p_i\}_{i\in I}$ be a frame with bounds $A,B$ and let
$T$ be a bounded operator.  If $\|Id-T\|^2 < \frac{A}{B}$
then $\{\p_i\}_{i\in I}$ and $\{T\p_i\}_{i\in I}$ are \vpa.
\end{prop}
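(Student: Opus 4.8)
The plan is to bound the lower frame bound of every \vpg directly, reusing the synthesis-operator bookkeeping from the proof of Theorem~\ref{thm1} rather than quoting that theorem as a black box. First I would observe that the hypothesis already forces $\|Id-T\|<1$: since $A\le B$ we have $A/B\le 1$, so $\|Id-T\|^2<A/B\le 1$. By a Neumann series $T$ is therefore invertible, and consequently $\{T\p_i\}_{i\in I}$ is the image of a frame under an invertible operator, hence genuinely a frame; this is what makes the assertion ``$\{\p_i\}$ and $\{T\p_i\}$ are \vpa'' meaningful. Write $L\colon\ell^2(I)\to\mathbb{H}$ for the synthesis operator of $\{\p_i\}$, so that $\|L\|\le\sqrt{B}$ and $L^*$ is the analysis operator with $\sqrt{A}\,\|x\|\le\|L^*x\|$ for all $x$. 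Since $\sum_i a_i T\p_i=T\sum_i a_i\p_i$, the synthesis operator of $\{T\p_i\}$ is exactly $TL$.

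Next I would let $P_\sigma$ be the orthogonal projection of $\ell^2(I)$ onto $\overline{\spn}\{e_i\}_{i\in\sigma}$, as in Theorem~\ref{thm1}. The analysis operator of the \vpg $\{\p_i\}_{i\in\sigma}\cup\{T\p_i\}_{i\in\sigma^c}$ then sends $x$ to $P_\sigma L^*x+P_{\sigma^c}L^*T^*x$, and because $P_\sigma$ and $P_{\sigma^c}$ have orthogonal ranges its squared norm is precisely the \vpg frame sum $\sum_{i\in\sigma}|\langle x,\p_i\rangle|^2+\sum_{i\in\sigma^c}|\langle x,T\p_i\rangle|^2$. Thus, setting $u=L^*x$ and $v=L^*T^*x$, it suffices to bound $\|P_\sigma u+P_{\sigma^c}v\|$ from below uniformly in $\sigma$.

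The crux is the algebraic identity $P_\sigma u+P_{\sigma^c}v=u-P_{\sigma^c}(u-v)$. Since $P_{\sigma^c}$ is a contraction this yields $\|P_\sigma u+P_{\sigma^c}v\|\ge\|u\|-\|u-v\|$, and, this is the whole point, the right-hand side is now independent of $\sigma$. Here $\|u\|=\|L^*x\|\ge\sqrt{A}\,\|x\|$, while $u-v=L^*(Id-T^*)x$ gives $\|u-v\|\le\|L\|\,\|Id-T\|\,\|x\|\le\sqrt{B}\,\|Id-T\|\,\|x\|$. Hence every \vpg has lower frame bound $\big(\sqrt{A}-\sqrt{B}\,\|Id-T\|\big)^2$, which is strictly positive exactly because $\|Id-T\|^2<A/B$; the uniform upper bound $B(1+\|T\|^2)$ follows from Proposition~\ref{upbound}. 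Therefore $\{\p_i\}$ and $\{T\p_i\}$ are \vpa.

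I expect the only genuine subtlety to be this rewriting together with the contraction step, which is exactly what converts the quantitative closeness hypothesis into a bound uniform over all partitions. It is worth noting that a direct appeal to Theorem~\ref{thm1} would not suffice, since with $\lambda=\|L-TL\|\le\sqrt{B}\,\|Id-T\|$ its requirement $\lambda(\sqrt{B_1}+\sqrt{B_2})\le A_1/2$ is strictly stronger than $\|Id-T\|^2<A/B$; the self-contained estimate above is therefore the natural route. One could alternatively first invoke Corollary~\ref{parcor} to assume $\{\p_i\}$ is Parseval, in which case $L^*$ is an isometry and the bound simplifies to $(1-\|Id-T'\|)^2$, but the computation above is no harder and avoids tracking how $T$ transforms under $S^{-1/2}$.
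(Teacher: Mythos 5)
Your proof is correct and is essentially the paper's own argument in operator dress: your identity $P_\sigma u+P_{\sigma^c}v=u-P_{\sigma^c}(u-v)$ followed by the contraction and reverse triangle inequality is exactly the paper's substitution $\langle x,T\p_i\rangle=\langle x,\p_i\rangle-\langle(I-T^*)x,\p_i\rangle$ on $\sigma^c$ followed by Minkowski's inequality, and both yield the same lower frame bound $\bigl(\sqrt{A}-\sqrt{B}\,\|Id-T^*\|\bigr)^2$. Your side remarks (invertibility of $T$, the upper bound via Proposition~\ref{upbound}, and the observation that Theorem~\ref{thm1} is too weak to be quoted directly) are all accurate but do not change the route.
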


\begin{proof}
Note that $T$ is  invertible and thus $\{ T \varphi _i \} _{i\in I} $ is
automatically a frame.  For every $\sigma \subset I$ and for every $x\in \mathbb{H}$ we have by Minkowski's inequality and subadditivity of the square root function
\begin{align*}
\bigg ( \sum_{i\in \sigma}|\langle x,\p_i \rangle |^2
&+ \sum_{i\in \sigma^c}|\langle x,T\p_i\rangle |^2 \bigg )^{1/2}\\
&= \bigg ( \sum_{i\in \sigma}|\langle x,\p_i\rangle|^2 + \sum_{i\in
\sigma^c}|\langle x,\p_i\rangle - \langle (I-T^{*})x,\p_i \rangle |^2
\bigg )^{1/2}\\
&\ge \bigg ( \sum_{i\in I}|\langle x,\p_i\rangle |^2 \bigg )^{1/2}
- \left ( \sum_{i\in \sigma^c}|\langle (I-T^{*})x,\p_i\rangle |^2
\right )^{1/2}\\
&\ge \sqrt{A}\|x\| - \sqrt{B}\big\|(I-T^{*})x\big\|\\
&\ge \left ( \sqrt{A} - \sqrt{B} \|I-T^{*}\| \right ) \|x\|.
\end{align*}
Thus, $\{\p_i\}_{i\in \sigma} \cup \{T\p_i\}_{i\in \sigma^c}$
forms a frame having
\[
\Big(\sqrt{A} - \sqrt{B} \|I-T^{*}\|\Big) ^2 >0
\]
as its lower frame bound.
\end{proof}

\begin{rmk}
Proposition \ref{popw} can be extended to a finite number of invertible operators by making the assumption that the sum over all $j$ of $\|Id-T_j\|$ is less than $\sqrt{A/B}$. A nearly identical proof gives $\{\p_i\}_{i\in I}$, $\{T_j \p_i\}_{i \in I, j \in [n]}$ are \vpa.
\end{rmk}

By applying Proposition~\ref{popw} to a power of the frame operator, we obtain
many examples where a frame and its canonical dual frame are woven.

\begin{cor}
  Let $\Phi = \{ \varphi _i \} _{i\in I}$ be a frame with frame constants
  $A,B >0 $ and frame operator $S$.  If the condition number  $B/A$ is sufficiently close to
  one, e.g., $B/A<2$, then $\Phi $ and the scaled canonical dual frame $\Psi = \{ \frac{2AB}{A+B}S ^{-1} \varphi _i \}
  _{i\in I}$  are woven. Likewise  $\Phi $
  and  and the scaled  canonical Parseval frame $\tilde \Psi = \{
  \frac{2 \sqrt{AB}}{\sqrt{A} + \sqrt{B}}S ^{-1/2} \varphi _i \}
  _{i\in I}$ are woven.  
\end{cor}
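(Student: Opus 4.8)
The plan is to apply Proposition~\ref{popw} with a carefully chosen bounded invertible operator $T$ in each case. For the scaled canonical dual I would take $T = \frac{2AB}{A+B}S^{-1}$, so that $T\varphi_i = \frac{2AB}{A+B}S^{-1}\varphi_i$ is exactly the frame $\Psi$; since $S$ is positive, self-adjoint and invertible, so is $T$, and hence $\{T\varphi_i\}_{i \in I}$ is automatically a frame. For the scaled canonical Parseval frame I would instead take $T = \frac{2\sqrt{AB}}{\sqrt{A}+\sqrt{B}}S^{-1/2}$, so that $T\varphi_i = \tilde\Psi$. In both cases it then only remains to verify the hypothesis $\|Id - T\|^2 < A/B$ of Proposition~\ref{popw}.

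The key estimate comes from the operator inequality $A \cdot Id \le S \le B \cdot Id$ established in Section~2, which forces $\mathrm{spec}(S) \subseteq [A,B]$, and hence $\mathrm{spec}(S^{-1}) \subseteq [1/B, 1/A]$ and $\mathrm{spec}(S^{-1/2}) \subseteq [1/\sqrt{B}, 1/\sqrt{A}]$. Since $Id - T$ is self-adjoint, its norm equals the supremum of $|1 - c\lambda|$ over $\lambda$ in the relevant spectrum, where $c$ is the scaling constant. In the dual case, $\lambda \mapsto 1 - \frac{2AB}{A+B}\lambda$ is decreasing on $[1/B, 1/A]$ and takes the values $\pm\frac{B-A}{A+B}$ at the two endpoints; the constant $\frac{2AB}{A+B}$, the harmonic mean of $A$ and $B$, is precisely the scalar that balances these endpoints and so minimizes the norm, yielding $\|Id - T\| \le \frac{B-A}{A+B}$. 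The same computation with $c = \frac{2\sqrt{AB}}{\sqrt{A}+\sqrt{B}}$ on $[1/\sqrt{B}, 1/\sqrt{A}]$ gives $\|Id - T\| \le \frac{\sqrt{B}-\sqrt{A}}{\sqrt{A}+\sqrt{B}}$ in the Parseval case.

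It then remains to check two scalar inequalities. For the dual I must verify $\left(\frac{B-A}{A+B}\right)^2 < \frac{A}{B}$; writing $r = B/A \ge 1$, this reduces to $r(r-1)^2 < (r+1)^2$, which holds for all $r < 2$ (in fact with room to spare). For the Parseval case I must verify $\left(\frac{\sqrt{B}-\sqrt{A}}{\sqrt{A}+\sqrt{B}}\right)^2 < \frac{A}{B}$; setting $s = \sqrt{B/A}$ this reduces to $\frac{s-1}{s+1} < \frac{1}{s}$, i.e. $s^2 - 2s - 1 < 0$, which holds whenever $s < 1 + \sqrt{2}$ and in particular whenever $B/A < 2$. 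Invoking Proposition~\ref{popw} in each case then shows that $\Phi$ is woven with $\Psi$ and with $\tilde\Psi$.

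Since the whole argument is a direct application of Proposition~\ref{popw}, I do not expect a serious obstacle; the only genuine content is the spectral computation of $\|Id - T\|$, and the one point worth emphasizing is that the seemingly ad hoc scaling constants $\frac{2AB}{A+B}$ and $\frac{2\sqrt{AB}}{\sqrt{A}+\sqrt{B}}$ are exactly the scalar multiples of $S^{-1}$ and $S^{-1/2}$ that minimize $\|Id - T\|$, which is what allows the condition number bound to be as generous as $B/A < 2$.
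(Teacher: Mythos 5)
Your proposal is correct and follows essentially the same route as the paper: both apply Proposition~\ref{popw} to $T = \frac{2AB}{A+B}S^{-1}$ and $\tilde T = \frac{2\sqrt{AB}}{\sqrt{A}+\sqrt{B}}S^{-1/2}$, bound $\|Id-T\|$ by $\frac{B-A}{B+A}$ (resp.\ $\frac{\sqrt{B}-\sqrt{A}}{\sqrt{B}+\sqrt{A}}$) via the spectral inclusion $\mathrm{spec}(S)\subseteq[A,B]$, and verify the resulting scalar inequality against $\sqrt{A/B}$. Your added observation that these scalings minimize $\|Id-T\|$, and your explicit reduction to $r(r-1)^2<(r+1)^2$, are just more detailed versions of the paper's one-line verification.
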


\begin{proof}
  We apply Proposition~\ref{popw} to the operators $T =
  \frac{2AB}{A+B}S ^{-1}$ for the scaled dual frame and to $\tilde T =
  \frac{2 \sqrt{AB}}{\sqrt{A} + \sqrt{B}}S ^{-1/2}$ for the scaled
  Parseval frame. Since the spectrum of $S$ is contained in the
  interval $[A,B]$, the spectrum of $Id - T$ is contained in the
  interval $[\frac{A-B}{A+ B}, \frac{B-A}{A+ B}]$ and thus $\| Id -
  T\| \leq \frac{B-A}{B+A}$. This norm  is majorized by $(A/B)^{1/2}$,
  whenever $B/A \leq 2$. The proof for $\tilde T $ is similar and, in
  fact, yields the condition $B/A \leq (\sqrt{2} + 1)^2$. 
\end{proof}

\begin{rmk}
  Theorem~\ref{thm1} and Proposition~\ref{popw} should be compared
  with the corresponding statements for the perturbation of frames
  (e.g., in ~\cite{ole_book}). For instance, if $\Phi = \{ \varphi _i \}
  _{i\in I} $ is a frame with frame bounds $A,B >0$ and  $T$ is a bounded
  operator satisfying $\|Id - T \| <1$, then the set $\Psi = \{ T \varphi _i
  \}_{i\in I } $ is also a frame (because $T$ is invertible). Under
  the stronger condition $ \|Id - T \| <A/B$, the frame $\Phi $ and
  $\Psi $ are even woven. A similar remark applies to
  Theorem~\ref{thm1}. 
\end{rmk}
\section{Gramians}

In this section, the Gramian and its relation to \vpg frames is
considered. The first result says that two Riesz bases are \vpa as
long as the cross Gramian is almost  diagonal.

\begin{prop}\label{cross_gram}
Let $\{e_k\}_{k=1}^{\infty}$ be an orthonormal basis
for $\mathbb{H}$ and let $\{\p_{\ell}\}_{\ell =1}^{\infty}$ be
a Riesz basis for $\mathbb{H}$.  Let
$$
A = \left ( \langle \p_{\ell},e_k\rangle \right )_{k,\ell =1}^{\infty}
= D+R,
$$
be the cross Gramian where $D$ is the diagonal of $A$.  If the diagonal entries satisfy $\inf\p_{1\le i \le \infty}|D_{ii}| \ge \lambda$ and $\|R\|
\le \frac{\lambda}{2}$, then $\{e_k\}_{k=1}^{\infty},
\{\p_{\ell}\}_{\ell =1}^{\infty}$ are \vpa.
\end{prop}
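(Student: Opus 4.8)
The plan is to pass to coordinates with respect to the orthonormal basis $\{e_k\}$ and to observe at the outset that, although the cross Gramian is almost diagonal, the two families are \emph{not} close in the sense required by Theorem~\ref{thm1} or Proposition~\ref{popw}: the diagonal entries $D_{\ell\ell}=\langle\p_\ell,e_k\rangle$ need not be near $1$, so $\{\p_\ell\}$ can be far from $\{e_\ell\}$ and those perturbation results cannot be invoked. Instead I would estimate the quadratic form of the weaving frame operator directly.

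First, for $x\in\mathbb{H}$ set $c=(\langle x,e_k\rangle)_k\in\ell^2$, so that $\|x\|=\|c\|$. Since $\p_\ell=\sum_k\langle\p_\ell,e_k\rangle e_k$, the cross Gramian $A$ is exactly the matrix of a bounded, invertible operator on $\ell^2$ (invertibility coming from the Riesz basis property), and a direct computation gives $\langle x,e_k\rangle=c_k$ together with $\langle x,\p_\ell\rangle=(A^{*}c)_\ell$. Writing $P_\sigma$ for the coordinate projection of $\ell^2$ onto the indices in $\sigma$, the lower-frame quantity of the \vpg $\{e_k\}_{k\in\sigma}\cup\{\p_\ell\}_{\ell\in\sigma^c}$ becomes
\[
\sum_{k\in\sigma}|\langle x,e_k\rangle|^2+\sum_{\ell\in\sigma^c}|\langle x,\p_\ell\rangle|^2=\|P_\sigma c\|^2+\|P_{\sigma^c}A^{*}c\|^2 .
\]

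The structural point is that $A^{*}=D^{*}+R^{*}$, that the diagonal operator $D^{*}$ commutes with every coordinate projection, and that $\inf_i|D_{ii}|\ge\lambda$ together with $\|R^{*}\|=\|R\|\le\lambda/2$ give $\|P_{\sigma^c}D^{*}c\|=\|D^{*}P_{\sigma^c}c\|\ge\lambda\|P_{\sigma^c}c\|$. A single application of the triangle inequality then yields
\[
\|P_{\sigma^c}A^{*}c\|\ge\lambda\|P_{\sigma^c}c\|-\tfrac{\lambda}{2}\|c\| .
\]
I would finish with a dichotomy on where the mass of $c$ lies. If $\|P_\sigma c\|\ge\tfrac12\|c\|$, the first term alone bounds the weaving quantity below by $\tfrac14\|c\|^2$. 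If instead $\|P_\sigma c\|<\tfrac12\|c\|$, then $\|P_{\sigma^c}c\|>\tfrac{\sqrt3}{2}\|c\|$, so the displayed estimate gives $\|P_{\sigma^c}A^{*}c\|\ge\tfrac{\lambda}{2}(\sqrt3-1)\|c\|>0$, and squaring bounds the weaving quantity below by $\tfrac{\lambda^2}{4}(\sqrt3-1)^2\|c\|^2$. In both cases one obtains a lower bound $\min\{\tfrac14,\tfrac{\lambda^2}{4}(\sqrt3-1)^2\}\|c\|^2$ that is independent of $\sigma$. The matching uniform upper bound is immediate from Proposition~\ref{upbound}, since both families are Bessel, and hence $\{e_k\}$ and $\{\p_\ell\}$ are \vpa.

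The main obstacle, and the reason the hypothesis is stated this way, is exactly that almost-diagonality controls only the off-diagonal mass $\|R\|$ and the \emph{size} of the diagonal from below, but says nothing about the deviation of the diagonal from $1$; so no perturbation estimate is available, and the dichotomy argument is what converts ``the diagonal dominates the remainder'' into a uniform lower frame bound valid for all $\sigma$.
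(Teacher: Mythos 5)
Your proof is correct, and it takes a genuinely different (dual) route from the paper's. The paper works on the synthesis side: for $a\in\ell^2$ it writes $\big\|\sum_{k\in\sigma}a_ke_k+\sum_{\ell\in\sigma^c}a_\ell\p_\ell\big\|=\|P_\sigma a+A(I-P_\sigma)a\|$, exploits that $P_\sigma a$ and $D(I-P_\sigma)a$ have disjoint supports so their norms add in quadrature (this orthogonality step is the paper's substitute for your dichotomy), and obtains the uniform lower bound $\tfrac12\min(1,\lambda)\|a\|$. But a uniform lower synthesis bound only says every \vpg is a Riesz \emph{sequence}; it gives no completeness, so the paper must then invoke Theorem~\ref{riesz_seq} to upgrade each \vpg to a Riesz basis and hence a frame. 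You instead bound the analysis map from below, $\sum_{k\in\sigma}|\langle x,e_k\rangle|^2+\sum_{\ell\in\sigma^c}|\langle x,\p_\ell\rangle|^2=\|P_\sigma c\|^2+\|P_{\sigma^c}A^*c\|^2$, with the mass dichotomy on $\|P_\sigma c\|$ doing the work. Since a lower bound on the analysis map \emph{is} the lower frame inequality, completeness is automatic and you never need Theorem~\ref{riesz_seq}; your argument is self-contained and yields the explicit uniform lower frame bound $\min\{\tfrac14,\tfrac{\lambda^2}{4}(\sqrt3-1)^2\}$. Your individual steps all check out: $\langle x,\p_\ell\rangle=(A^*c)_\ell$, $\|R^*\|=\|R\|\le\lambda/2$, $D^*$ commutes with the coordinate projections, and in the second branch the quantity $\tfrac{\lambda}{2}(\sqrt3-1)\|c\|$ is positive, so squaring it is legitimate. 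What the paper's route buys is the stronger intermediate conclusion that every \vpg is a Riesz basis with uniform Riesz bounds (information your argument recovers only a posteriori via Theorem~\ref{frame_seq}); what yours buys is independence from the Riesz-basis machinery of Section~5 and a direct, quantitative frame bound. Your opening observation is also apt: since the diagonal entries are only bounded away from zero and need not be near $1$, neither Theorem~\ref{thm1} nor Proposition~\ref{popw} applies, so a direct estimate of this kind is indeed required.
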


\begin{proof}
Given $a \in \ell ^2$ and $\sigma \subset \mathbb{N}$ we have
\begin{align*}
\sum_{\ell \in \sigma^c}a_{\ell}\p_{\ell} &=
\sum_{\ell \in \sigma^c} \sum_{k=1}^{\infty}a_{\ell}
\langle \p_{\ell},e_k\rangle e_k\\
&= \sum_{k=1}^{\infty}\left ( \sum_{\ell \in \sigma^c}a_{\ell}
\langle \p_{\ell},e_k \rangle \right ) e_k\\
&= \sum_{k=1}^{\infty} \left ( A(I-P_{\sigma})a\right )_{k}e_k,
\end{align*}
where $P_{\sigma}$ is the diagonal projection onto span $\{e_k\}_{k\in
\sigma}$. Now we compute:

\begin{align*}
\bigg\|\sum_{k\in \sigma}a_ke_k + &\sum_{\ell \in \sigma^c}a_{\ell} \p_{\ell}\bigg\| = \| P_\sigma a + A(I-P_\sigma)a\|\\
&= \|P_\sigma a + D(I-P_\sigma)a + R(I-P_\sigma)a\|\\
&\geq \|P_\sigma a + D(I-P_\sigma)a\| - \|R (I-P_\sigma)a\|\\
&\geq \left(\|P_\sigma a\|^2 + \|D(I-P_\sigma) a\|^2 \right)^{1/2} - \|R\|\|(I-P_\sigma)a\|\\
&\geq \left(\|P_\sigma a\|^2 + \lambda^2\|(I-P_\sigma) a\|^2 \right)^{1/2} - \dfrac{\lambda}{2}\|(I-P_\sigma)a\|\\
&\geq \dfrac{1}{2}\left(\|P_\sigma a\|^2 + \lambda^2\|(I-P_\sigma) a\|^2 \right)^{1/2}\\
&\geq \dfrac{1}{2}\min(1,\lambda) \|a\| \, .
\end{align*}
This proves that $\{e_k\}_{k \in \sigma} \cup \{\p_\ell\}_{\ell \in
  \sigma^c}$ is a Riesz sequence with lower Riesz bound
$\min(1,\lambda^2)/4$ independent of  $\sigma \subseteq \mathbb{N}$.
Theorem \ref{riesz_seq} now implies  that the \vpgs are actually Riesz bases
with uniform bounds, and so the two sets are \vpa. 
\end{proof}

\begin{rmk}
Replacing $\{e_k\}_{k=1}^\infty$ with a Riesz basis is possible with
an appropriate change in required bounds. If it is replaced by a Riesz
basis $\{\s_k\}_{k=1}^\infty$, then there is a bounded invertible
operator so that $e_k = T^{-1}\s_k$. Therefore, the two sets
$\{e_k\}_{k=1}^\infty$ and $\{T^{-1}\p_\ell\}_{\ell=1}^\infty$ will be
\vpa if the correct bounds hold according to Proposition
\ref{cross_gram} and thus applying $T$ gives $\{\s_k\}_{k=1}^\infty$
and $\{\p_\ell\}_{\ell=1}^\infty$ are \vpa. 
\end{rmk}

\section{\vpgc Gabor Frames}

A potential application is the preprocessing of signals using Gabor frames. First, recall that a Gabor system for $L^2(\mathbb{R})$ is of the form
\[
\{ M_{bn} T_{am} g : m,n \in \mathbb{Z} \}
\]
where $a,b > 0$ are fixed parameters, $g \in L^2(\mathbb{R})$ is a
fixed window function, and the time-frequency shifts $M_{bn} T_{am}$
of $g$ are 
given by 
\[
M_{bn}T_{am}g(t) = e^{2\pi i bn t} g(t-am) \, .
\]
for  $a,b \in \mathbb{R}$, $m,n\in \mathbb{Z}$.  If such a system forms a frame, then it is called a Gabor frame. See \cite{gtfbook} for a thorough approach to time-frequency analysis and Gabor systems. The problem to consider is as follows.

\begin{prob}\label{gabor}
Given a fixed lattice generated by $a,b > 0$ with $ab < 1$ and rotated Gaussians $U_j g_{\alpha_j}$, where $g_{\alpha_i}(x) = e^{-\alpha_i x^2}$, are the Gabor frames $\{T_{am}M_{bn}U_jg_{\alpha_j}\}_{m,n\in \mathbb{Z}, j \in [M]}$ \vpa?
\end{prob}

We are unable to give a positive answer to Problem \ref{gabor}, however, we believe that any such family of Gabor frames is always \vpa.
\begin{rmk}
It seems that for two frames to be \vpa, in some sense they need to be close to one another. However, the problem is if $\mathscr{B}_1$ and $\mathscr{B}_2$ are arbitrary Bessel sequences, then $\Phi\cup\mathscr{B}_1$ and $\Psi\cup \mathscr{B}_2$ are still \vpa. That is, one would need to somehow reduce the frames to a ``minimal'' \vpa state for them to truly resemble each other.  In this paper we presented two sufficient conditions for weaving: {\it perturbation
theory}  and {\it diagonal dominance}.  It is possible that some kind of {\it localization} of the cross Gramian (without
diagonal dominance) would be sufficient.  This would also
answer Problem \ref{gabor}.  This also suggests that perhaps there is a converse to these results.  That is, if two frames
are {\it intrinsically} localized and \vpa, then is their
cross Gramian also localized (in the same matrix algebra?).
\end{rmk}
\noindent {\bf Acknowledgement}:  The authors would like
to thank Janet Tremain for many helpful discussions on
this research as well as providing several examples to us.


{\footnotesize \begin{center}
\begin{tabulary}{\linewidth}{C C C}
Travis Bemrose, & Peter G. Casazza, & Richard G. Lynch\\ 
tjb6f8@mail.missouri.edu, & pete@math.missouri.edu, & rglz82@mail.missouri.edu\\
\multicolumn{3}{c}{Department of Mathematics, University of Missouri-Columbia, USA}\end{tabulary}

\bigskip

\begin{tabulary}{\linewidth}{C C}
Karlheinz Gr\"ochenig, & Mark C. Lammers \\
karlheinz.groechenig@univie.ac.at, & lammersm@uncw.edu \\
Faculty  of Mathematics,& Department of Mathematics,\\
University of Vienna, Austria &University of North Carolina Wilmington
\end{tabulary}

\end{center}

\end{document}